\numberwithin{figure}{section}
\numberwithin{equation}{section}
\newtheorem{theorem}{Theorem}[section]
\newtheorem{proposition}[theorem]{Proposition}
\newtheorem{lemma}[theorem]{Lemma}
\newtheorem{corollary}[theorem]{Corollary}
\theoremstyle{definition}
\newtheorem{example}[theorem]{Example}
\newtheorem{remark}[theorem]{Remark}
\newtheorem*{open problem}{Open Problem}
\newcommand{\ZZ}{\mathbb{Z}}
\newcommand{\NN}{\mathbb{N}}
\newcommand{\set}[2]{\left\{ #1 \;|\; #2 \right\}}
\newcommand{\bigset}[2]{\left\{ #1 \;\big|\; #2 \right\}}
\newcommand{\Sym}{{\mathfrak{S}}}
\newcommand{\Perm}{\operatorname{Sym}}
\newcommand{\des}{\operatorname{des}}
\newcommand{\Des}{\operatorname{Des}}
\newcommand{\id}{\operatorname{id}}
\newcommand{\Chow}{\operatorname{\underline{H}}}
\newcommand{\Chowaug}{\operatorname{H}}
\newcommand{\Eulerian}{\operatorname{E}}
\newcommand{\rk}{\operatorname{rk}}
\newcommand{\Dfn}[1]{\emph{\bfseries #1}}
\definecolor{darkblue}{rgb}{0,0,0.7}
\definecolor{lightblue}{rgb}{0.68,0.85,1}
\definecolor{lightgrey}{rgb}{0.9,0.9,0.9}
\definecolor{grey}{rgb}{0.5,0.5,0.5}
\definecolor{darkgreen}{RGB}{0,128,0}
\def\cB{\mathcal{B}}
\def\cD{\mathcal{D}}
\def\cE{\mathcal{E}}
\def\cF{\mathcal{F}}
\def\cL{\mathcal{L}}
\def\cS{\mathcal{S}}
\title[The Chow and augmented Chow polynomials of Uniform Matroids]{The Chow and augmented Chow polynomials \\ of Uniform Matroids}
\author[E.~Hoster]{Elena Hoster}
\address[E.~Hoster]{Fakultät für Mathematik, Ruhr-Universität Bochum, Germany}
\email{elena.hoster@rub.de}
\begin{document}

\begin{abstract}
    We provide explicit combinatorial formulas for the Chow polynomial and for the augmented Chow polynomial of uniform matroids, thereby proving a conjecture by Ferroni.
    These formulas refine existing formulas by \Citeauthor{Hampe_2017} and by \Citeauthor{eur2023stellahedralgeometrymatroids}, offering a combinatorial interpretation of the coefficients based on Schubert matroids.
    As a byproduct, we count Schubert matroids by rank, number of loops, and cogirth.
\end{abstract}

\maketitle
%%%%%%%%%%%%%%%%%%%%%%%%%%%%%%%%%%%%%%%%%%%%%%%%%%%%%%%%%%
%%%%%%%%%%%%%%%%%%%%%%%%%%%%%%%%%%%%%%%%%%%%%%%%%%%%%%%%%%
\section{Introduction and main results}
\label{sec: intro}

The Chow polynomial $\Chow_M(x)$ and the augmented Chow polynomial $\Chowaug_M(x)$ of a matroid $M$ are the Hilbert-Poincaré series of the Chow ring and of the augmented Chow ring, respectively. 
These polynomials are known to be unimodal, palindromic, and $\gamma$-positive,
as proven in~\cite{Ferroni2024}.
Both polynomials are also conjectured to be real-rooted, which is only known for the augmented Chow polynomials of \emph{uniform matroids} \cite{Ferroni2024}.
A second proof of the $\gamma$-positivity, along with a combinatorial formula for the~$\gamma$-expansion of both~$\Chow_M(x)$ and~$\Chowaug_M(x)$, 
is provided in \cite{stump2024chowaugmentedchowpolynomials}.

\medskip
Let $U_{k,n}$ be the uniform matroid on ground set $[n] = \{1,\dots , n\}$ and of rank $k$, 
with bases being all $k$-element subsets.
As shown in \cite[Theorem 1.11]{Ferroni2024}, the coefficients of the Chow polynomial and of the augmented Chow polynomial of an arbitrary loopless matroid~$M$ are term-wise maximized when $M$ is a uniform matroid.

\medskip
This paper proves a conjecture by Ferroni regarding the coefficients in the case of uniform matroids and provides a monomial expansion for their Chow and augmented Chow polynomials. 
\begin{theorem}[{\Cite[Conjecture]{Priv_Comm_Ferroni}}]
    \label{thm: Ferroni}
    The Chow polynomial and the augmented Chow polynomial of the uniform matroid $U_{k,n}$ are given by
    \begin{align*}
    \Chow_{U_{k,n}} (x)
    &=
    \sum_{m = 0}^{k-1}
    \# \big\{ \substack{\text{loopless Schubert matroids on the ground set } [n] \\ \text{of rank $m+1$ and cogirth greater than } n-k } \ \big\}
    \cdot x^{m} \, \\
     \Chowaug_{U_{k,n}} (x)
    &=
    \sum_{m = 0}^{k}
    \# \big\{ \substack{\text{\phantom{loop} Schubert matroids\phantom{less} on the ground set } [n] \\ \text{of rank \phantom{+}$m$\phantom{1} and cogirth greater than } n-k } \ \big\} 
    \cdot x^{m} \,.
    \end{align*}
\end{theorem}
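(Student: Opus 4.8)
The plan is to reduce both formulas to a single enumerative statement about Schubert matroids on $[n]$ --- the labeled matroids whose cyclic flats form a chain (equivalently, the matroids isomorphic to a Schubert matroid, also called nested matroids) --- graded by rank, number of loops, and cogirth, and then to match this enumeration against the existing coefficient formulas of \Citeauthor{Hampe_2017} for $\Chow$ and of \Citeauthor{eur2023stellahedralgeometrymatroids} for $\Chowaug$. I would start from the standard parametrization of such a matroid $M$ by its chain of cyclic flats $\emptyset \subseteq Z_0 \subsetneq Z_1 \subsetneq \cdots \subsetneq Z_s \subseteq [n]$ together with the ranks $\rk Z_i$: each layer $Z_i \setminus Z_{i-1}$ carries a uniform matroid $U_{r_i,m_i}$ with $r_i = \rk Z_i - \rk Z_{i-1}$ and $m_i = |Z_i \setminus Z_{i-1}|$, so that $M$ is recovered from the ordered set partition of $[n]$ into layers and the rank-jump vector $(r_i)$. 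Two of the three statistics are then immediate: the rank is $\sum_i r_i$ and the number of loops is $|Z_0|$ (the bottom, rank-$0$ cyclic flat).

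The only delicate statistic is the cogirth, and translating ``cogirth $> n-k$'' into the layer data is the key enabling step. I would first observe that any matroid with a coloop $e$ has the hyperplane $[n]\setminus\{e\}$ of size $n-1$, hence cogirth $1$; so for $k<n$ the condition forces $Z_s=[n]$, i.e.\ $M$ has no coloops. Granting this, the largest hyperplane is obtained by dropping a single unit of rank inside the topmost layer, which gives
\[
\operatorname{cogirth}(M) \;=\; m_s - r_s + 1,
\]
the codimension-plus-one of the top uniform layer $U_{r_s,m_s}$. With rank, loops, and cogirth all expressed through the layers, the number of Schubert matroids on $[n]$ of prescribed rank $r$, loop number $\ell$, and cogirth $>n-k$ becomes an explicit sum of multinomial coefficients
\[
\sum \binom{n}{m_0,m_1,\dots,m_s},
\]
taken over compositions $n=m_0+\cdots+m_s$ and rank-jumps $1\le r_i\le m_i-1$ (for $i\ge 1$, with $r_0=0$) subject to $m_0=\ell$, $\sum_i r_i=r$, and $m_s-r_s+1>n-k$. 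Carrying out this count is exactly the promised enumeration of Schubert matroids by rank, number of loops, and cogirth.

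To handle $\Chow$ and $\Chowaug$ uniformly I would use that adjoining loops to a matroid does not change its cogirth (loops become coloops of the dual and create no small cocircuit). Hence a Schubert matroid on $[n]$ splits canonically as a set of $\ell$ loops together with a loopless Schubert matroid of the same rank and cogirth on the remaining $n-\ell$ elements, so the ``$\Chowaug$-count'' is obtained from the ``$\Chow$-count'' by the binomial transform $\sum_{\ell}\binom{n}{\ell}(\cdots)$. This matches the shift between the two statements (rank $m+1$ versus rank $m$, loopless versus arbitrary) and mirrors the known algebraic relation between the augmented and ordinary Chow rings; it reduces the augmented identity to the loopless one, i.e.\ to showing that the number of loopless Schubert matroids on $[n]$ of rank $m+1$ and cogirth $>n-k$ equals $[x^m]\,\Chow_{U_{k,n}}$.

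The hard part will be this last equality: proving that the multinomial sum produced by the enumeration coincides term-by-term with the coefficient supplied by Hampe's formula (and, for the augmented version, by the formula of \Citeauthor{eur2023stellahedralgeometrymatroids}). I expect this to reduce to a binomial identity whose shape is dictated precisely by the top-layer description of the cogirth --- the bound $m_s-r_s+1>n-k$ both caps the codimension of the top layer and forbids coloops --- so the subtlety is concentrated there rather than in the easy statistics. As consistency checks the construction already reproduces the extremal coefficients (the value $1$ at both ends, from $U_{m+1,n}$ and from the all-singletons matroid), the degree-one coefficient $1+\binom{n}{2}+\cdots+\binom{n}{k-1}$, and, in the Boolean case $k=n$, the Eulerian numbers, since the loopless Schubert matroids on $[n]$ of rank $m+1$ are then counted by the number of permutations of $[n]$ with $m$ descents. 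Promoting these checks to the general identity, and verifying that the cyclic-flag data matches the objects in the Hampe and \Citeauthor{eur2023stellahedralgeometrymatroids} formulas in a statistic-preserving way, is the remaining work.
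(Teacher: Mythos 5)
The enumeration half of your plan is sound, and it is a genuinely different route to what the paper proves as \Cref{thm: SM counting when restricted}: the paper parametrizes Schubert matroids as $\cS_{I,\pi}$, reads off loops and cogirth from $I$ and $\pi$ (\Cref{prop: Schubert matroid props}), and computes the orbit size $\#\set{\cS_{\pi(I),\pi}}{\pi\in\Sym_n}=\binom{n}{\Delta I}$ by the orbit--stabilizer theorem, whereas you parametrize by the chain of cyclic flats and count ordered set partitions of $[n]$ with rank jumps directly. Both yield the same multinomial sums, and your top-layer cogirth formula $m_s-r_s+1$ is consistent with the paper's $n+1-\max(I)$.

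The gap is in the other half. You propose to match these counts against ``the existing coefficient formulas'' of \Citeauthor{Hampe_2017} and of \Citeauthor{eur2023stellahedralgeometrymatroids}, but those results give the coefficients only for the Boolean matroid $U_{n,n}$ (Eulerian and binomial Eulerian numbers); for general $k<n$ there is no prior coefficient formula to compare against, and producing one is precisely the main content of the theorem. The paper obtains it by applying the $\gamma$-expansion \eqref{eq: Chow gamma exp Stump} to the $R$-labeling $\lambda(S\prec T)=\min(T\setminus S)$ of $\cL(U_{k,n})$, identifying maximal chains with certain subset permutations (\Cref{lma: subset permutations in lattice}, \Cref{prop: Chow and augChow uniform gamma exp}), and then carrying out a nontrivial coefficient comparison (\Cref{thm: multi Chows monomial}) to reach the monomial expansion $\sum_{I\ni 1}\binom{n}{\Delta I}\,x^{|I|-1}$ that matches the Schubert count. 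Your proposal explicitly defers exactly this step to ``the remaining work'' without supplying a method, so as written the proof is incomplete. A secondary issue: your reduction of the augmented statement to the loopless one by stripping off $\ell$ loops relates the count on $[n]$ to loopless counts on ground sets of size $n-\ell$, not on $[n]$, so it would additionally require an identity expressing $\Chowaug_{U_{k,n}}$ through the polynomials $\Chow_{U_{k-\ell,\,n-\ell}}$, which you would also have to prove; the paper avoids this by treating both cases in parallel from the same monomial expansion.
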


\Citeauthor{Hampe_2017} showed in \Cite{Hampe_2017} that the Chow polynomial of the Boolean matroid~$U_{n,n}$ is the~$h$-vector of the permutahedron, the \emph{$n$-th Eulerian polynomial} $A_n(t)$.
Moreover, he showed that the~\emph{$k$-th Eulerian number} counts loopless Schubert matroids on $n$ elements of rank $k$.
Likewise, \Citeauthor{eur2023stellahedralgeometrymatroids} proved in \Cite{eur2023stellahedralgeometrymatroids} that the augmented Chow polynomial of~$U_{n,n}$ is the~$h$-vector of the stellahedron, the~\emph{$n$-th binomial Eulerian polynomial~$\widetilde{A}_n(x)$}.
Here, the coefficients count Schubert matroids of corresponding rank but are not necessarily loopless. 
\Cref{thm: Ferroni} refines both of these representations.

In \Cite{Hameister_2021}, \Citeauthor{Hameister_2021} give a combinatorial formula for the Chow polynomial of the~{$q$-uniform} matroid~$U_{k,n}(q)$ on ground set $[n]$, the $q$-analog of the uniform matroid which becomes~$U_{k,n}$ for~$q = 1$.
This formula is particularly useful in the cases~$k=n$ and~$k = n-1$.
It provides~${x\cdot \Chow_{U_{n-1,n}}(x) = d_n(x)}$, where~$d_n(x)$ denotes the~$n$-th \emph{derangement polynomial}.
More recently, \Citeauthor{liao2024chowringsaugmentedchow} extended this result to the augmented Chow polynomial in \Cite[Theorem 4.7]{liao2024chowringsaugmentedchow}, giving~$\Chowaug_{U_{n-1,n}}(x) = A_n(x)$.

\medskip
We prove \Cref{thm: Ferroni} using combinatorial arguments.
In particular, we provide a combinatorial formula on the number of Schubert matroids on a fixed ground set, according to their \emph{rank}, \emph{cogirth}, and number of \emph{loops}.

\subsection{Main results}
For any nonempty subset $I \subseteq \{1,\dots,  n\}$, consider the disjoint partition of~$I = I_1 \cup \dots \cup I_s$ into maximal consecutive subsets such that $\min(I_j) < \min(I_{j+1})$.
Define the multinomial coefficient
\[
\binom{n}{\Delta I} = \binom{n}{\min(I_1) - 1, \min(I_2)-\min(I_1), \min(I_3)-\min(I_2), \dots , \min(I_s)-\min(I_{s-1})} \,,
\]
which takes the differences of these minima of adjacent subsets.
For $I=\{\}$ being the empty set, we set $\binom{n}{\Delta I} = 1$.

\begin{example}
    The set $I = \{2,3,5,7,8\}$ is partitioned into $\{2,3\}\cup\{5\}\cup\{7,8\}$, the multinomial coefficient is
    \[
    \binom{n}{\Delta I} = \binom{n}{2-1, 5-2, 7-5} = \binom{n}{1,3,2} = \frac{n!}{1!\ 3!\ 2!\ (n-6)!} \,
    \]
    for $n \geq 8$.
\end{example}

A \Dfn{descent} in a sequence of integers $a=(a_1,\dots , a_m)$ is a position $i$ such that $a_i > a_{i+1}$. The \Dfn{descent set} of $a$ is the set of all such positions,~$\Des(a)=\set{i\in \{1,\dots, m-1\}}{ a_i > a_{i+1} }$. Its size gives the number of descents, denoted by~$\des(a)$.
This notation also applies to permutations when written in one-line notation.
Let~$\Eulerian(m,D)$ denote the number of permutations in~$\Sym_m$ that have descent set~$D$.\\
Define $\mathsf{nc}(m)$ to be the set of all subsets of $\{1,\dots , m\}$ that contain \textbf{n}o \textbf{c}onsecutive integers.

\begin{theorem}
    \label{thm: Chow four ways}
    The Chow polynomial of the uniform matroid $U_{k,n}$ is given by any of the following expansions:
    \begin{align}
        \Chow_{U_{k,n}} (x) 
        &= \quad
        \sum_{ \substack{I \subseteq \{1,\dots,k\} \\ 1 \in I } } \ \binom{n}{\Delta I} \ x^{\vert I \vert - 1} \label{eq: Chow monomial} \\
        &=
        \quad
        \sum_{ \substack{ D \in \mathsf{nc}(k-1) \\ 1\notin D } } \quad \Eulerian(n,D) \ \ \cdot x^{\vert D \vert } \cdot (1 + x)^{k - 1 -2\cdot \vert D \vert } \label{eq: Chow Eulerian gamma}    \\
        &= 
        \sum_{ \substack{ \sigma \in \Sym_k \\ \Des(\sigma) \in \mathsf{nc}(k-1) \\ \sigma(1) < \sigma(2) } } 
        \binom{n - \sigma(k)}{k - \sigma(k)} \cdot x^{\des(\sigma)} 
        \cdot (1 + x)^{k - 1 -2\cdot \des(\sigma)} \label{eq: Chow binom gamma} \,.
    \end{align}
\end{theorem}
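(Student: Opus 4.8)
The plan is to anchor the first expansion \eqref{eq: Chow monomial} to a count of Schubert matroids, and then to derive the two $\gamma$-type expansions from it by purely combinatorial manipulations. The key structural input is that a Schubert matroid is the same as a \emph{labelled nested} matroid, so it is encoded by its chain of cyclic flats $\emptyset = Z_0 \subsetneq Z_1 \subsetneq \dots \subsetneq Z_s$ together with the increment data $d_j = \rk(Z_j)-\rk(Z_{j-1})$ and $a_j = |Z_j|-|Z_{j-1}|$ (so $1 \le d_j \le a_j$), plus an ordered set partition of $[n]$ recording which elements fall in each increment. Granting \Cref{thm: Ferroni}, \eqref{eq: Chow monomial} is equivalent to the assertion that the number of loopless Schubert matroids on $[n]$ of rank $r$ and cogirth $>n-k$ equals $\sum \binom{n}{\Delta I}$ over $I\subseteq\{1,\dots,k\}$ with $1\in I$ and $|I|=r$. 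I would prove this bijectively: to $I$ with maximal blocks of sizes $\beta_1,\dots,\beta_s$ and block–minima $1=m_1<\dots<m_s$ associate the increments $d_j=\beta_j$, $a_j=m_{j+1}-m_j$ (with $a_s=n-m_s+1$), so that $\binom{n}{\Delta I}=\binom{n}{a_1,\dots,a_s}$ counts exactly the labellings. Then $\rk=\sum_j d_j=|I|$, looplessness $\iff m_1=1 \iff 1\in I$, and — the delicate point — the cogirth is governed \emph{only} by the top increment, $\operatorname{cogirth}=a_s-d_s+1=n-\max(I)+1$, so that $\operatorname{cogirth}>n-k \iff \max(I)\le k \iff I\subseteq\{1,\dots,k\}$. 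Maximality of the blocks (hence a gap between consecutive blocks, forcing $d_j<a_j$ for $j<s$) is precisely what makes the $Z_j$ genuine distinct cyclic flats, so the correspondence is a bijection.

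For the passage from \eqref{eq: Chow monomial} to \eqref{eq: Chow Eulerian gamma}, the identity I would use is $\binom{n}{\Delta I}=\beta_n(S_I)$, where $\beta_n(S)$ is the number of permutations of $[n]$ with descent set contained in $S$ and $S_I=\{\,j : j\notin I,\ j+1\in I\,\}\subseteq\{1,\dots,k-1\}$; this is immediate from unwinding both multinomials. Writing $\beta_n(S_I)=\sum_{D\subseteq S_I}\Eulerian(n,D)$ and exchanging the order of summation turns \eqref{eq: Chow monomial} into $\sum_D \Eulerian(n,D)\,f_D(x)$ with $f_D(x)=\sum_{I:\,S_I\supseteq D}x^{|I|-1}$. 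A position analysis computes $f_D$ directly: the constraint $S_I\supseteq D$ forces position $1$ and all positions in $D+1$ into $I$ and all positions in $D$ out of $I$, leaving $k-1-2|D|$ free positions each contributing a factor $(1+x)$; this is consistent exactly when $1\notin D$ and $D$ has no two consecutive elements, giving $f_D(x)=x^{|D|}(1+x)^{k-1-2|D|}$ for $D\in\mathsf{nc}(k-1)$ with $1\notin D$ and $f_D=0$ otherwise. This is \eqref{eq: Chow Eulerian gamma}.

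For the passage from \eqref{eq: Chow Eulerian gamma} to \eqref{eq: Chow binom gamma}, I would group the sum in \eqref{eq: Chow binom gamma} by $D=\Des(\sigma)$; since for $D\in\mathsf{nc}(k-1)$ with $1\notin D$ the side condition $\sigma(1)<\sigma(2)$ is automatic, the two expansions agree term by term provided $\sum_{\sigma\in\Sym_k,\ \Des(\sigma)=D}\binom{n-\sigma(k)}{k-\sigma(k)}=\Eulerian(n,D)$ for every $D\subseteq\{1,\dots,k-1\}$. This I would prove by standardisation: a permutation $\tau\in\Sym_n$ with $\Des(\tau)=D$ has $\tau(k)<\tau(k+1)<\dots<\tau(n)$, so it is recovered from its pattern $\sigma=\operatorname{std}(\tau(1),\dots,\tau(k))\in\Sym_k$ (which again satisfies $\Des(\sigma)=D$) together with the value set $\{\tau(1),\dots,\tau(k)\}$; the junction inequality $\tau(k)<\tau(k+1)$ forces $\{1,\dots,\sigma(k)\}$ into that set and leaves $k-\sigma(k)$ further values to be chosen from $\{\sigma(k)+1,\dots,n\}$, which is exactly $\binom{n-\sigma(k)}{k-\sigma(k)}$ possibilities.

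The genuinely delicate step is the first one, specifically the cogirth computation: the cogirth of a nested matroid is \emph{not} the minimum over all increments of $a_j-d_j+1$ (a tempting but false guess), but is controlled solely by the top cyclic flat; getting this right — together with the realisation that the relevant objects are labelled nested matroids rather than merely Gale–minimal bases — is what makes the sum over \emph{unrestricted} $I\subseteq\{1,\dots,k\}$ in \eqref{eq: Chow monomial} come out correctly. If one prefers to avoid invoking \Cref{thm: Ferroni} at this stage (to keep the logic non-circular), the same three expansions can instead be anchored to a known closed form for $\Chow_{U_{k,n}}$ from the literature, with the Schubert-matroid count then recovered as a corollary.
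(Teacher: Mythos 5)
Your three expansions are correctly shown to be equivalent to one another, and both equivalences essentially reproduce the paper's own computations run in the opposite direction: your passage from \eqref{eq: Chow monomial} to \eqref{eq: Chow Eulerian gamma} via $\binom{n}{\Delta I}=\sum_{D\subseteq S_I}\Eulerian(n,D)$ and the position analysis of $f_D$ is the coefficient comparison in the proof of \Cref{thm: multi Chows monomial}, and your standardisation argument for $\sum_{\Des(\sigma)=D}\binom{n-\sigma(k)}{k-\sigma(k)}=\Eulerian(n,D)$ is the fiber count in \Cref{prop: Chow and augChow uniform gamma exp}. Likewise, the loop and cogirth statements you assert for nested matroids match \Cref{prop: Schubert matroid props}, and your multinomial count of labellings corresponds to \Cref{thm: SM counting when restricted}.

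The genuine gap is the anchor. Your primary route ``grants'' \Cref{thm: Ferroni}; but that statement is Ferroni's conjecture, which this paper \emph{proves as a consequence of} the very theorem you are trying to establish (the paper's proof of \Cref{thm: Ferroni} is one line: combine \eqref{eq: Chow monomial} with \Cref{thm: SM counting when restricted}). So as written the argument is circular: you prove that the right-hand side of \eqref{eq: Chow monomial} equals a count of Schubert matroids --- which is true, and is the content of \Cref{thm: SM counting when restricted} --- but you never connect either quantity to $\Chow_{U_{k,n}}(x)$ itself. Your fallback, ``anchor to a known closed form from the literature,'' is the right instinct but is not carried out, and it is precisely where the paper does its real work: it derives \eqref{eq: Chow Eulerian gamma} from the $\gamma$-expansion of the Chow polynomial over maximal chains of $\cL(U_{k,n})$ with an $R$-labeling, which requires identifying the edge-labeling sequences of maximal chains as the subset permutations $\sigma\in\Perm(S)$ with $\{1,\dots,\sigma(\max S)\}\subseteq S$ (\Cref{lma: subset permutations in lattice}) and then showing, by extending such a $\sigma$ to a permutation of $[n]$, that the admissible chains with descent set $D$ are counted by $\Eulerian(n,D)$. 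Without this step, or an explicit appeal to an equivalent published formula together with a verification that it matches one of your three expressions, none of the three right-hand sides is tied to the Chow polynomial and the theorem is not proved.
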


We deduce \Cref{eq: Chow Eulerian gamma,eq: Chow binom gamma} from the $\gamma$-expansion given in \Cite[Theorem 1.1]{stump2024chowaugmentedchowpolynomials}.
We then prove \eqref{eq: Chow monomial} by comparing the coefficients with \eqref{eq: Chow Eulerian gamma}.

\medskip
In the same way, we get a similar version of \Cref{thm: Chow four ways} for the augmented case. 

\begin{theorem}
    \label{thm: Chow aug four ways}
    The augmented Chow polynomial of the uniform matroid $U_{k,n}$ is given by any of the following expansions:
    \begin{align}
        \Chowaug_{U_{k,n}} (x)
        &= \quad \sum_{ \substack{I \subseteq \{1,\dots,k\} } } \ \binom{n}{\Delta I} \ x^{\vert I \vert } \label{eq: Chow aug monomial} \\
        &=
        \quad 
        \sum_{ \substack{ D \in \mathsf{nc}(k-1) } } \quad \Eulerian(n,D) \quad \cdot x^{\vert D \vert } \cdot (1 + x)^{k -2\cdot \vert D \vert } \label{eq: Chow aug Eulerian gamma} \\
        &=
        \sum_{ \substack{ \sigma \in \Sym_k \\ \Des(\sigma) \in \mathsf{nc}(k-1) } }
        \binom{n - \sigma(k)}{k - \sigma(k)} \cdot x^{\des(\sigma)} 
        \cdot (1 + x)^{k -2\cdot \des(\sigma)} \nonumber \,.
    \end{align}
\end{theorem}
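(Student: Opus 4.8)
The plan is to mirror, in the augmented setting, the three-step argument that establishes \Cref{thm: Chow four ways}. The three expansions in \Cref{thm: Chow aug four ways} are the exact analogues of \eqref{eq: Chow monomial}, \eqref{eq: Chow Eulerian gamma} and \eqref{eq: Chow binom gamma}, and each differs from its Chow counterpart only in two bookkeeping features: the constraints $1\notin D$ and $1\in I$ are dropped, and the ambient degree rises from $k-1$ to $k$, so that $(1+x)^{k-1-2|D|}$ becomes $(1+x)^{k-2|D|}$ and $x^{|I|-1}$ becomes $x^{|I|}$. Accordingly I would run the same three steps, taking as input the $\gamma$-expansion of $\Chowaug_{U_{k,n}}$ recorded in \Cite[Theorem 1.1]{stump2024chowaugmentedchowpolynomials}.

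First I would derive the Eulerian $\gamma$-expansion \eqref{eq: Chow aug Eulerian gamma} by identifying the $j$-th $\gamma$-coefficient of Stump et al.\ with $\sum_{D\in\mathsf{nc}(k-1),\,|D|=j}\Eulerian(n,D)$, exactly as for \eqref{eq: Chow Eulerian gamma}; the only change is that descent sets $D$ containing the position $1$ are now admissible, which is precisely what raises the total degree of the $\gamma$-expansion to $k$. Second, I would pass from \eqref{eq: Chow aug Eulerian gamma} to the binomial $\gamma$-expansion in the last line by the grouping used for \eqref{eq: Chow binom gamma}. Since $D\subseteq\{1,\dots,k-1\}$, a permutation of $[n]$ counted by $\Eulerian(n,D)$ is increasing from position $k$ onward; classifying such permutations by the relative order $\sigma\in\Sym_k$ of their first $k$ values, the ascent forced at position $k$ makes the first $k$ values contain $\{1,\dots,\sigma(k)\}$, leaving $\binom{n-\sigma(k)}{k-\sigma(k)}$ choices for the remaining entries and producing the stated weight. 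Allowing $1\in D$ here is exactly what removes the restriction $\sigma(1)<\sigma(2)$ present in \eqref{eq: Chow binom gamma}.

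The genuine content, and the step I expect to be the main obstacle, is the monomial expansion \eqref{eq: Chow aug monomial}, which I would obtain by expanding $(1+x)^{k-2|D|}$ in \eqref{eq: Chow aug Eulerian gamma} and comparing the coefficient of $x^m$. This reduces to the identity
\[
\sum_{\substack{I\subseteq\{1,\dots,k\} \\ |I|=m}}\binom{n}{\Delta I}
=
\sum_{D\in\mathsf{nc}(k-1)}\Eulerian(n,D)\,\binom{k-2|D|}{m-|D|}\,.
\]
To prove it I would use the descent statistics $\alpha_n(S)=\#\{\sigma\in\Sym_n : \Des(\sigma)\subseteq S\}$ and $\beta_n(S)=\Eulerian(n,S)$, linked by inclusion--exclusion $\beta_n(S)=\sum_{T\subseteq S}(-1)^{|S\setminus T|}\alpha_n(T)$. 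The key observation is that $\binom{n}{\Delta I}=\alpha_n(S_I)$ for the shifted block-minima $S_I=\{\min(I_1)-1,\dots,\min(I_s)-1\}$, with the leading $0$ omitted whenever $1\in I$; substituting the inclusion--exclusion expansion on the right and collecting multinomials is then a finite combinatorial matching. This is also where the discrepancy with the Chow case is localized: the unconstrained block beginning at $1$ in the sum over all $I\subseteq\{1,\dots,k\}$ corresponds to the now-admissible descent at position $1$, and apart from these boundary contributions the matching is the one already carried out for \eqref{eq: Chow monomial}.
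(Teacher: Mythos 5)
Your first two expansions follow essentially the same route as the paper: there, the Eulerian $\gamma$-expansion \eqref{eq: Chow aug Eulerian gamma} is derived from the formula of Stump et al.\ via the $R$-labeling $\lambda(S\prec T)=\min(T\setminus S)$ on $\cL(U_{k,n})$ and the classification of edge-labeling sequences as subset permutations (\Cref{lma: subset permutations in lattice}), and the binomial form is obtained by standardizing the first $k$ values with the same fiber count $\binom{n-\sigma(k)}{k-\sigma(k)}$ that you give. Your remark that admitting $1\in D$ is exactly what removes the condition $\sigma(1)<\sigma(2)$ also matches how the paper treats the augmented case (it simply drops that restriction throughout).

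The gap is in your third step. You correctly reduce \eqref{eq: Chow aug monomial} to a coefficient identity and correctly observe that $\binom{n}{\Delta I}=\alpha_n(S_I)$, but after substituting $\Eulerian(n,D)=\sum_{T\subseteq D}(-1)^{|D\setminus T|}\alpha_n(T)$ you dismiss the remaining work as ``a finite combinatorial matching.'' Concretely, that matching requires proving, for each admissible $T$,
\[
\#\set{I\subseteq\{1,\dots,k\}}{\,|I|=m,\ S_I=T\,}
=\sum_{\substack{D\in\mathsf{nc}(k-1)\\ D\supseteq T}}(-1)^{|D\setminus T|}\binom{k-2|D|}{m-|D|}\,,
\]
together with a justification that coefficients of the $\alpha_n(T)$ may be equated (they are not obviously linearly independent for all $n,k$); neither point is routine, and this alternating sum is where the actual content of the monomial expansion lives. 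The paper sidesteps the signs entirely by refining to a multivariate statement (\Cref{thm: multi Chows monomial}): it records \emph{which} positions $i$ contribute a factor $x_i$ from $(1+x_i)$, not merely how many. For a fixed squarefree monomial $\boldsymbol{x}^S$ the contributing descent sets are then exactly the subsets $D$ of the set $T_S$ of block minima of $S$, and $\sum_{D\subseteq T_S}\Eulerian(n,D)=\#\set{w\in\Sym_n}{\Des(w)\subseteq T_S}$ holds by definition, with no inclusion--exclusion, giving the multinomial $\binom{n}{\Delta I}$ directly. If you wish to stay univariate you must actually prove the alternating-sum identity above; otherwise I would adopt the position-marked (multivariate) refinement, after which your key observation finishes the argument exactly as in the paper.
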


The $\gamma$-expansions given in \eqref{eq: Chow Eulerian gamma} and \eqref{eq: Chow aug Eulerian gamma} are also covered in \Cite{liao2024equivariantgammapositivitychowrings}.

\begin{remark}[Multivariate analogues]
    \label{rem: multi version}
    In \Cref{sec: proofs}, we define the multivariate Chow polynomial and the multivariate augmented Chow polynomial as multivariate versions of $\Chow_{M}(x)$ and of~$\Chowaug_{M}(x)$.
    We prove \Cref{thm: Chow four ways,thm: Chow aug four ways}
    by proving their multivariate analogs.
\end{remark}

We show in \Cref{sec: proofs}, how to translate \Cref{thm: Ferroni} into \eqref{eq: Chow monomial} and \eqref{eq: Chow aug monomial}.
Thereby, 
\begin{itemize}
    \item $I \subseteq [n]$ indexes a set of Schubert matroids,
    \item $1\in I$ ensures that these are loopless, and 
    \item $\max (I) \leq k$ ensures that their cogirth is greater than $n-k$.
\end{itemize}

\begin{example}
    We compute the Chow polynomial of $U_{3,5}$ using all given ways in \Cref{thm: Chow four ways}. 
    Using \eqref{eq: Chow monomial}, we get 
    \begin{align*}
        \Chow_{U_{3,5}}(x) 
        =&
        \binom{5}{0} \cdot x^{\vert \{1\} \vert -1} 
        + \binom{5}{0} \cdot x^{\vert \{1,2\} \vert -1}
        + \binom{5}{0,2} \cdot x^{\vert \{1,3\} \vert -1}
        + \binom{5}{0} \cdot x^{\vert \{1,2,3\} \vert -1} \\
        =&
        1 + x + 10x + x^2 \,, \\
        \intertext{using \eqref{eq: Chow Eulerian gamma}, we get }
        \Chow_{U_{3,5}}(x) 
        =&
        \Eulerian(5,\{ \}) \cdot x^{ 0 } \cdot (1+x)^{ 2-2\cdot 0 }
        + \Eulerian(5,\{ 2 \}) \cdot x^{ 1 } \cdot (1+x)^{ 2-2\cdot 1 } \\
        =&
        (1+x)^2 +  9x \,, \\
        \intertext{and \eqref{eq: Chow binom gamma} finally gives}
        \Chow_{U_{3,5}}(x) 
        =&
        \underbrace{\binom{5 - 3}{3 - 3} \cdot x^{0} \cdot (1+x)^{2} }_{\sigma = 123}
        + \underbrace{\binom{5 - 2}{3 - 2} \cdot x^{1} \cdot (1+x)^{0}}_{\sigma = 132}
        + \underbrace{\binom{5 - 1}{3 - 1} \cdot x^{1} \cdot (1+x)^{0}}_{\sigma = 231} \\
        =&
        \binom{2}{0} \cdot (1+x)^2 
        + \binom{3}{1} \cdot x 
        + \binom{4}{2} \cdot x \\
        =&
        (1+x)^2 + 3x + 6x \,.
    \end{align*}
    All three polynomials coincide with~$\Chow_{U_{3,5}}(x) = 1 + 11x + x^2$.
\end{example}

The following corollary was proven in \Cite{Ferroni2024}. We reprove the results by ordering the sets~$I$ within the monomial expansions \eqref{eq: Chow monomial} and \eqref{eq: Chow aug monomial}, according to the minimum of their last maximal consecutive subset $I_s$, where $I=I_1 \cup \dots \cup I_s$ is the partition into maximal consecutive subsets.

\begin{corollary}[{\Cite[Theorem 1.9]{Ferroni2024}}] 
    \label{cor: chow via derangements}
    We have 
    \begin{align*}
        \Chow_{U_{k,n}}(x) 
          &= \phantom{1+ x\cdot } \sum_{j = 0}^{k-1} \binom{n}{j} \ d_j(x) \ (1 + x + \dots + x^{k-1-j}) \,, \\
        \Chowaug_{U_{k,n}} (x) 
          &= 1+ x \cdot \sum_{j = 0}^{k-1} \binom{n}{j} \ A_j(x) \ (1 + x + \dots + x^{k-1-j}) \,,
    \end{align*}
    where $d_j(x)$ is the $j$-th derangement polynomial, and $A_j(x)$ is the $j$-th Eulerian polynomial.
\end{corollary}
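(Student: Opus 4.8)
As the statement suggests, the plan is to reorganize the monomial expansions \eqref{eq: Chow monomial} and \eqref{eq: Chow aug monomial} by grouping the index sets $I$ according to $a := \min(I_s)$, the minimum of their last maximal consecutive block. Every nonempty $I$ decomposes uniquely as $I = J \cup \{a, a+1, \dots, b\}$, where $\{a, \dots, b\} = I_s$ is the last block, $a \le b \le k$, and $J = I \setminus I_s$. Since $I_s$ is a maximal consecutive block we have $a - 1 \notin I$, whence $J \subseteq \{1, \dots, a-2\}$; in the Chow case the constraint $1 \in I$ moreover forces $1 \in J$ whenever $a \ge 2$. I expect the aggregate contribution of all $I$ with a fixed value of $a$ to be exactly the summand indexed by $j = a - 1$ in the claimed formulas.

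Two facts drive the computation. First, $\binom{n}{\Delta I}$ depends only on the block minima of $I$, which are the block minima of $J$ together with $a$; in particular it does not depend on the length of the last block, i.e.\ on $b$. Summing the monomial over $b \in \{a, \dots, k\}$ therefore produces the geometric factor $1 + x + \dots + x^{k-a} = 1 + x + \dots + x^{k-1-j}$, accompanied in the augmented case by one extra factor of $x$ coming from the exponent $|I|$ rather than $|I|-1$. Second, because the listed entries of $\Delta I$ sum to $a-1$, a routine factorial manipulation gives the factorization
\[
\binom{n}{\Delta I} = \binom{n}{a-1}\binom{a-1}{\Delta J},
\]
which isolates the global binomial coefficient $\binom{n}{a-1} = \binom{n}{j}$ and leaves a residual sum running over the prefix $J$.

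The crux is to recognize this residual sum. Writing $p := a-1$, it equals $\sum_{J \subseteq \{1,\dots,p-1\},\, 1 \in J} \binom{p}{\Delta J}\, x^{|J|}$ in the Chow case and $\sum_{J \subseteq \{1,\dots,p-1\}} \binom{p}{\Delta J}\, x^{|J|}$ in the augmented case. Comparing with \eqref{eq: Chow monomial} and \eqref{eq: Chow aug monomial} applied to the corank-one matroid $U_{p-1,p}$, these are precisely $x \cdot \Chow_{U_{p-1,p}}(x)$ and $\Chowaug_{U_{p-1,p}}(x)$. By the evaluations recalled in the introduction, $x \cdot \Chow_{U_{n-1,n}}(x) = d_n(x)$ and $\Chowaug_{U_{n-1,n}}(x) = A_n(x)$, so the residual sums are $d_{a-1}(x) = d_j(x)$ and $A_{a-1}(x) = A_j(x)$, respectively.

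Multiplying the three factors gives $\binom{n}{j}\, d_j(x)\, (1 + x + \dots + x^{k-1-j})$ for the contribution of $a = j+1$ in the Chow case and $x \binom{n}{j}\, A_j(x)\, (1 + x + \dots + x^{k-1-j})$ in the augmented case; summing over $j = 0, \dots, k-1$ yields the two stated identities. I would treat the boundary cases explicitly: $a = 1$ forces $J = \emptyset$ and reproduces the $j=0$ term via $d_0(x) = A_0(x) = 1$; the empty set $I = \emptyset$, which occurs only in the augmented sum, contributes the leading $1$; and $a = 2$ is impossible in the Chow case---it would require $1 \in J \subseteq \emptyset$---in agreement with $d_1(x) = 0$. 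The only genuine subtlety I anticipate is the multinomial factorization together with the observation that $\binom{n}{\Delta I}$ is blind to the length of the last block; once these are in place, the result follows from the two cited corank-one identities and elementary bookkeeping.
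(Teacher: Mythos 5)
Your proposal is correct and takes essentially the same route as the paper's own proof: the paper likewise groups the sets $I$ by the minimum of the last maximal consecutive block, factors $\binom{n}{\Delta I}=\binom{n}{j}\binom{j}{\Delta(I\setminus I_s)}$, sums over the length of the last block to produce the geometric factor, and identifies the residual sum with $x\cdot\Chow_{U_{j-1,j}}(x)=d_j(x)$ (resp.\ $\Chowaug_{U_{j-1,j}}(x)=A_j(x)$) via the cited corank-one identities. Your explicit treatment of the boundary cases $a=1$, $a=2$, and $I=\emptyset$ matches the paper's handling of the $j=0,1$ terms and the constant term.
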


Note that \Cref{thm: Chow four ways,thm: Chow aug four ways} hold at the level of multivariate polynomials as already mentioned in \Cref{rem: multi version}.
An analogue of \Cref{cor: chow via derangements} does not seem to hold.

%%%%%%%%%%%%%%%%%%%%%%%%%%%%%%%%%%%%%%%%%%%%%%%%%%%%%%%%%%
%%%%%%%%%%%%%%%%%%%%%%%%%%%%%%%%%%%%%%%%%%%%%%%%%%%%%%%%%%
\section{Background}
\label{sec: background}

%%%%%%%%%%%%%%%%%%%%%%%%%%%%%%%%%%%%%%%%%%%%%%%%%%%%%%%%%%
\subsection{\texorpdfstring{$R$}{R}-labeling}
Let $P = (P, \leq)$ be a finite graded poset of rank $n$ with minimal element $\widehat{0}$ and maximal element $\widehat{1}$.
The set of cover relations (or edges) of $P$ is denoted by $\cE(P)$.
A maximal chain in $P$ is a set $\cF = \{ \widehat{0} = F_0 < F_1 < \dots < F_n = \widehat{1} \}$. 
For a labeling $\lambda : \cE(P) \to \ZZ$ on the set of edges of $P$ and for a maximal chain $\cF$, 
let $\lambda_\cF = (\lambda_1,\dots , \lambda_n)$ with $\lambda_i = \lambda( F_{i-1} < F_i )$ be its edge-labeling sequence . 
We call $\lambda$ an \Dfn{$R$-labeling} 
if every interval of $P$ admits a unique maximal chain $\cF$ with strictly increasing edge-labeling sequence $\lambda_\cF = ( \lambda_1 < \lambda_2 < \dots < \lambda_n)$.

%%%%%%%%%%%%%%%%%%%%%%%%%%%%%%%%%%%%%%%%%%%%%%%%%%%%%%%%%%
\subsection{Matroids}
    
    A \Dfn{matroid} $M$ consists of a finite set $E$ (called \Dfn{ground set}) and a collection $\cB$ of subsets of $E$ (called \Dfn{bases}), satisfying the following two properties:
    \begin{itemize}
        \item $\cB$ is nonempty, and
        \item if~$B_1,B_2\in \cB$ are bases and if~$x\in B_1\setminus B_2$, 
        then there is an element~$y \in B_2 \setminus B_1$ such that~$(B_1 \setminus \{x\}) \cup \{y\} \in \cB$ is a basis.
    \end{itemize}
    Every basis has the same size, called the \Dfn{rank} of $M$ denoted by $\rk(M)$.
    A subset $I \subseteq E$ is called \Dfn{independet} if it is a subset of a basis. 
    Otherwise, it is called \Dfn{dependent}.
    The \Dfn{dual matroid}~$M^\ast$ of $M$ is the matroid on the same ground set $E$ and with bases~$\set{ B^c = E\setminus B }{ B \text{ basis of } M}$.
    
    An element $x\in E$ is a \Dfn{loop} in $M$ if it is not contained in any basis.
    A \Dfn{coloop} in $M$ is a loop in the dual matroid $M^\ast$.
    
    A \Dfn{circuit} $C\subseteq E$ is a minimal dependent set of $M$, that is, $C \setminus \{x\}$ is independent for all $x\in C$. 
    The \Dfn{girth} of $M$ is the cardinality of the smallest circuit,
    and the \Dfn{cogirth} of $M$ is the girth of $M^\ast$.
    
    The \Dfn{rank} of a subset $S \subseteq E$ is $\rk_M(S) = \max\set{\# I }{ I \subseteq S \text{ independent  in } M}$, the size of the largest independent subset contained in~$S$.
    If $\rk_M(S \cup \{x\}) > \rk_M(S)$ for every $x \in E \setminus S$, we call~$S$ a \Dfn{flat}.
    The set of flats of $M$ ordered by inclusion forms a lattice, the \Dfn{lattice of flats} denoted by~$\cL(M)$.
    
    \medskip
    Let $[n] = \{1,\dots , n\}$, and let $\binom{[n]}{k}$ be the set of all subsets of $[n]$ that have size~$k$. 

%%%%%%%%%%%%%%%%%%%%%%%%%%%%%%%%%%%%%%%%%%%%%%%%%%%%%%%%%%    
\subsubsection{Uniform matroids}
    For $n\geq k$, the \Dfn{uniform matroid} $U_{k,n}$ is the matroid on the ground set $[n]$ with set of bases $\binom{[n]}{k}$.
    Its lattice of flats $\cL(U_{k,n})$ consists of all subsets of size smaller than~$k$ together with the maximal element $[n]$, ordered by inclusion.
    
\subsubsection{Schubert matroids}
    We use the definition of Schubert matroids given in \Cite{Ferroni_2023} for the fixed ground set $[n]$.
    For~$\pi \in \Sym_n$, let $\leq_{\pi}$ be the total order on $[n]$ given by the one-line notation of~$\pi$, that is
    \[ \pi(1) \leq_\pi \pi(2) \leq_\pi \dots \leq_\pi \pi(n) \,. \]
    If~$\pi = \id$ is the identity permutation, this gives the usual total order $1 < 2 < \dots < n$.
    For subsets~$I = \{i_1 < \dots < i_k\}, J=\{j_1 < \dots < j_k\} \subseteq [n]$ with~$\vert I \vert = \vert J \vert$, 
    we write
    \[
    I \leq_\pi J \quad \text{ if } i_m \leq_\pi j_m \text{ for each } m \in \{1,\dots , k\} \,.
    \]
    For a set $I \in \binom{[n]}{k}$, and for a permutation $\pi\in \Sym_n$, 
    the \Dfn{Schubert matroid} $\cS_{I,\pi}$ is the matroid on the ground set $[n]$ with bases
    \[
        \bigset{ J \in \binom{[n]}{k} }{ I \leq_\pi J } \,.
    \]
    Schubert matroids are a special class of \emph{lattice path matroids}, sometimes also called \emph{nested matroids}~\cite{Hampe_2017}, \emph{generalized Catalan matroids}~\cite{Bonin_2003} or \emph{shifted matroids}~\cite{ardila2002catalanmatroid}.
    
%%%%%%%%%%%%%%%%%%%%%%%%%%%%%%%%%%%%%%%%%%%%%%%%%%%%%%%%%%    
%%%%%%%%%%%%%%%%%%%%%%%%%%%%%%%%%%%%%%%%%%%%%%%%%%%%%%%%%%
\section{Proofs of the main results}
\label{sec: proofs}

In this section, we introduce multivariate versions of the Chow polynomial and of the augmented Chow polynomial which become the usual polynomials when all variables are set equal.
We then derive \Cref{thm: Chow four ways} and \Cref{thm: Chow aug four ways} by proving their multivariate analogs.
In \Cref{sec: Schubert matroids}, we study Schubert matroids to translate \Cref{thm: Ferroni} into \eqref{eq: Chow monomial} and \eqref{eq: Chow aug monomial}.
The main tool for this translation is \Cref{thm: SM counting when restricted} in which we count Schubert matroids on a fixed ground set, according to rank, cogirth, and the number of loops.
Finally, we prove \Cref{cor: chow via derangements} and discuss some sequences of coefficients in $\Chow_{U_{k,n}}(x)$ and in $\Chowaug_{U_{k,n}}(x)$.

%%%%%%%%%%%%%%%%%%%%%%%%%%%%%%%%%%%%%%%%%%%%%%%%%%%%%%%%%%
\subsection{Multivariate Chow and augmented Chow polynomials}

The lattice of flats $\cL(M)$ of a matroid $M$ always admits an $R$-labeling \Cite[Proposition 2.2]{MR0354473}.
Fix such an $R$-labeling $\lambda$, then, by \Cite[Theorem 1.1]{stump2024chowaugmentedchowpolynomials}, the Chow polynomial of $M$ is given by
\begin{equation}
    \label{eq: Chow gamma exp Stump}
    \Chow_{M} (x)
    = \sum_{ \cF } x^{ \des(\lambda_\cF ) } \cdot (1 + x)^{\rk(M) { - 1} -2\cdot \des(\lambda_\cF)} \,,
\end{equation}
where the sum ranges over all maximal chains $\cF$ whose edge-labeling sequence $\lambda_\cF$
has no consecutive descents, $\Des(\lambda_\cF) \in \mathsf{nc}(k-1)$, and with $1\notin \Des(\lambda_\cF)$.

The augmented Chow polynomial of $M$ is given by
\begin{equation}
    \label{eq: augChow gamma exp Stump}
    \Chowaug_{M} (x)
    = \sum_{ \cF } x^{ \des(\lambda_\cF ) } \cdot (1 + x)^{\rk(M) -2\cdot \des(\lambda_\cF)} \,,
\end{equation}
where the sum ranges over all maximal chains $\cF$ whose edge-labeling sequence $\lambda_\cF$
has no consecutive descents, $\Des(\lambda_\cF) \in \mathsf{nc}(k-1)$.

Define the \Dfn{multivariate Chow polynomial} of $M$ by
\begin{align*}
    \Chow_M (\boldsymbol{x})
    =&
    \sum_{ \substack{ \cF \\ 1 \notin \Des(\lambda_F)  } } \left(\prod_{i\in \Des(\lambda_\cF)} x_i \right) \cdot  
    \left(\prod_{ \substack{i \in \{1,\dots , k-1 \} \\ i,i+1 \notin \Des(\lambda_\cF) } } (1+x_i) \right) &&\in \NN [x_1,\dots , x_{\rk(M)-1}]
\intertext{and define the \Dfn{multivariate augmented Chow polynomial} of~$M$ by}
    \Chowaug_M (\boldsymbol{x})
    =&\quad \
    \sum_{ \cF } \quad \left(\prod_{i\in \Des(\lambda_\cF)} x_i \right) \cdot  
    \left(\prod_{ \substack{i \in \{0,\dots , k-1 \} \\ i,i+1 \notin \Des(\lambda_\cF) } } (1+x_i) \right) &&\in \NN [x_0,\dots , x_{\rk(M)-1}]
\end{align*}
Here, both sums range over all maximal chains $\cF$ in~$\cL(M)$ such that $\Des(\lambda_\cF) \in \mathsf{nc}(k-1)$ contains no consecutive elements.
Setting all $x_i = x$, we get back the usual Chow and augmented Chow polynomials of $M$, respectively, as given in \eqref{eq: Chow gamma exp Stump} and \eqref{eq: augChow gamma exp Stump}.

\begin{remark}[A natural choice for the multivariate version]
    Chow polynomials and augmented Chow polynomials are evaluations of the \emph{Poincaré-extended $\boldsymbol{ab}$-index} \Cite[Theorem 2.6]{stump2024chowaugmentedchowpolynomials},
    which is a polynomial in the variable~$y$ with coefficients in the non-commutative ring $\ZZ\langle \boldsymbol{a},\boldsymbol{b}\rangle$.
    This polynomial was introduced in \Cite{poincareextended}
    and encodes the positions of ascents and descents of edge-labeling sequences of maximal chains.
    Applying the evaluation approach from \Cite{stump2024chowaugmentedchowpolynomials} to the identity given in~\Cite[Corollary 2.11]{poincareextended}, and distinguishing descents by their position, we derive the multivariate forms presented here.
\end{remark}

%%%%%%%%%%%%%%%%%%%%%%%%%%%%%%%%%%%%%%%%%%%%%%%%%%%%%%%%%%
\subsection{Eulerian numbers and subset permutations}
An $R$-labeling on $\cL(M)$ can be constructed by any total order on the atoms of $\cL(M)$,
see \Cite[Proposition 2.2]{MR0354473}.
For the uniform matroid $U_{k,n}$, the usual total order $1< \dots< n $ yields the $R$-labeling $\lambda$, defined by 
\begin{equation}
    \label{eq: R labeling uniform matroid}
    \lambda(S \prec T) = \min( T\setminus S ) \,.
\end{equation}
In particular, the entries in the sequence $\lambda_\cF = (\lambda_1,\dots , \lambda_k)$ for a maximal chain $\cF$ in $\cL(U_{k,n})$ are all different.

\medskip
A \Dfn{subset permutation} $\sigma\in \Perm(S)$ is a permutation of a subset $S\subseteq [n]$.
For a subset~${S=\{s_1<\dots < s_k\}}$, the one-line notation of $\sigma = \sigma_1 \sigma_2 \dots \sigma_k$ is given by $\sigma_i = \sigma(s_i)$.

\begin{lemma}
    \label{lma: subset permutations in lattice}
    The map $\cF \mapsto \lambda_\cF$ sending maximal chains in $\cL(U_{k,n})$ to their edge-labeling sequence with labels defined in \eqref{eq: R labeling uniform matroid}
    is injective.
    Its image is the following set of subset permutations:
    \begin{align*}
        \bigcup_{\substack{S \subseteq [n] \\ \vert S \vert = k }}
        \bigset{ \sigma \in \Perm(S) }{ \{1,\dots , \sigma(\max S ) \} \subseteq S } \,.
    \end{align*}
\end{lemma}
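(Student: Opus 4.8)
The plan is to make every maximal chain of $\cL(U_{k,n})$ explicit, read off its edge-labeling sequence under \eqref{eq: R labeling uniform matroid}, reinterpret that sequence as a subset permutation, and then pin down exactly which subset permutations arise. Since the flats of $U_{k,n}$ of rank $r<k$ are precisely the $r$-element subsets of $[n]$ and the only flat of rank $k$ is $[n]$, every maximal chain has the shape $\emptyset = F_0 \subset F_1 \subset \dots \subset F_{k-1} \subset F_k = [n]$ with $\lvert F_i\rvert = i$ for $i \le k-1$. The first $k-1$ covers each adjoin a single new element, so $\lambda_i = \min(F_i\setminus F_{i-1})$ is that element, whereas the final cover gives $\lambda_k = \min([n]\setminus F_{k-1})$. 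Thus $\{\lambda_1,\dots,\lambda_{k-1}\} = F_{k-1}$ and $\lambda_k \notin F_{k-1}$, so the labels are pairwise distinct and $S := \{\lambda_1,\dots,\lambda_k\}$ has $k$ elements; reading $(\lambda_1,\dots,\lambda_k)$ as a one-line notation produces the subset permutation $\sigma\in\Perm(S)$ with $\sigma(s_i)=\lambda_i$ for $S=\{s_1<\dots<s_k\}$, and in particular $\sigma(\max S)=\lambda_k$.

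Injectivity is then immediate, since the chain is reconstructed from its labeling sequence by $F_i=\{\lambda_1,\dots,\lambda_i\}$ for $i\le k-1$ together with $F_k=[n]$, so $\cF$ is determined by $\lambda_\cF$.

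For the image I would translate the defining relation $\lambda_k=\min([n]\setminus F_{k-1})$, which holds automatically for every chain, into the stated membership condition. Writing $[n]\setminus F_{k-1}=([n]\setminus S)\cup\{\lambda_k\}$, this relation holds if and only if every element of $[n]\setminus S$ exceeds $\lambda_k$, that is, \ie if and only if $\{1,\dots,\lambda_k\}=\{1,\dots,\sigma(\max S)\}\subseteq S$; this places the image inside the claimed set. For the reverse inclusion, given a $k$-subset $S$ and $\sigma\in\Perm(S)$ with $\{1,\dots,\sigma(\max S)\}\subseteq S$, I would set $F_i=\{\sigma(s_1),\dots,\sigma(s_i)\}$ for $i\le k-1$ and $F_k=[n]$, verify that this is a genuine maximal chain (each $F_i$ with $i<k$ is a flat, and $F_{k-1}\prec[n]$ is a cover because no flat lies strictly between a rank-$(k-1)$ flat and the top), and confirm that its labeling recovers $\sigma$; the only nontrivial point is that the last label equals $\min([n]\setminus F_{k-1})=\sigma(\max S)$, which is exactly the equivalence above run backwards.

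The step demanding the most care is this final translation: the top cover $F_{k-1}\prec[n]$ behaves differently from the earlier single-element covers, and one must check that the hypothesis $\{1,\dots,\sigma(\max S)\}\subseteq S$ is precisely what forces $\min([n]\setminus F_{k-1})$ to coincide with the prescribed last entry $\lambda_k$. Once the chains are written out explicitly, the remainder is routine bookkeeping.
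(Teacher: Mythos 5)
Your proposal is correct and follows essentially the same route as the paper: write out the maximal chains explicitly, observe that the first $k-1$ labels record the adjoined elements while the last label is $\min([n]\setminus F_{k-1})$, and characterize the image by the equivalence between $\lambda_k$ being that minimum and the condition $\{1,\dots,\sigma(\max S)\}\subseteq S$. Your treatment of the reverse inclusion and of injectivity is somewhat more explicit than the paper's, but the underlying argument is the same.
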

\begin{proof}
    A maximal chain in $\cL(U_{k,n})$ is of the form
    \[
        \cF = \left\{ \phantom{\big |} \{\} \subset \{\lambda_1\} \subset \{\lambda_1, \lambda_2\} \subset \dots \subset \{\lambda_1,\dots , \lambda_{k-1}\} \subset \{1,\dots , n \} \ \right\}
    \]
    for pairwise distinct integers $\lambda_1,\dots , \lambda_{k-1}$.
    Its edge-labeling sequence is~$\lambda_{\cF} = ( \lambda_1, \lambda_2 , \dots , \lambda_{k-1}, \lambda_k)$ with $\lambda_k = \min \left( [n] \setminus \{\lambda_1 , \dots , \lambda_{k-1} \} \right)$ 
    being determined by the preceding labels.
    This gives the surjection $\cF \mapsto \lambda_\cF$.
    Moreover, the first $k-1$ entries in $\lambda_{\cF}$ of a chain~{$\cF = \{ F_0 \subset \dots \subset F_{k-1} \subset [n]  \}$}
    specify the order in which the elements from $\cF_{k-1}$ are adjoined along $\cF$.\\
    Vice versa, let the first~$k-1$ entries of a permutation~$\sigma\in \Perm(S)$ for a set $S\in \binom{[n]}{k}$ determine the first $k-1$ labels $\sigma_1, \dots , \sigma_{k-1}$ of an edge-labeling sequence $\lambda_{\cF}$. 
    Since the $k$-th entry $\lambda_k$ of this sequence is the smallest integer in $[n]$ which has not appeared yet in $\lambda_{\cF}$, we have $\sigma_k = \lambda_k$ if and only if for every integer $i < \sigma_k$ there is an index $j\in \{1,\dots , k-1\}$ such that~$\sigma_j = i$, and thus~$i \in S$.
\end{proof}

Let $\sigma = \sigma_1 \dots \sigma_k \in \Perm(S)$ be a subset permutation of a subset $S \subseteq [n]$. 
Then, we can
\begin{itemize}
    \item \Dfn{extend} $\sigma$ to the permutation ${\sigma}^{\uparrow} = \sigma_1 \dots \sigma_k t_1 \dots t_{n-k} \in \Sym_n$ with $\{t_1 < \dots < t_{n-k} \} = S^c$, or
    \item \Dfn{standardize} $\sigma$ to the permutation $\sigma_{\downarrow}\in \Sym_k$ by sending $\sigma_i$ to its
    image under the unique order-preserving map $S \to \{1,\dots , k\}$.
\end{itemize}
Observe that both extending and standardizing preserves descents.

\begin{example}
    Let $n = 8$, and let $\sigma = 3641\in \Perm(\{1,3,4,6\})$.
    Then, extending $\sigma$ gives the permutation~${\sigma}^\uparrow = 36412578 \in \Sym_8$, while standardizing $\sigma$ gives~$\sigma_{\downarrow} = 2431 \in \Sym_4$.
\end{example}

\begin{proposition}
    \label{prop: Chow and augChow uniform gamma exp}
    The multivariate Chow polynomial of the uniform matroid $U_{k,n}$ is given by
    \begin{align*}
        \Chow_{U_{k,n}} (\boldsymbol{x})
        &= \quad
        \sum_{ \substack{ D \in \mathsf{nc}(k-1) \\ 1\notin D } } \quad 
        \Eulerian(n,D) \quad \ 
        \cdot \left(\prod_{i\in D} x_i \right) 
        \cdot \left(\prod_{ \substack{i \in \{1,\dots , k-1 \} \\ i,i+1 \notin D } } (1+x_i) \right) \\
        &= 
        \sum_{ \substack{ \sigma \in \Sym_k \\ \Des(\sigma) \in \mathsf{nc}(k-1) \\ \sigma(1) < \sigma(2) } } 
        \binom{n - \sigma(k)}{k - \sigma(k)} 
        \cdot \left(\prod_{i\in \Des(\sigma)} x_i \right) 
        \cdot \left(\prod_{ \substack{i \in \{1,\dots , k-1 \} \\ i,i+1 \notin \Des(\sigma) } } (1+x_i) \right) \,
    \end{align*}
    and the multivariate augmented Chow polynomial of the uniform matroid $U_{k,n}$ is given by
    \begin{align*}
        \Chowaug_{U_{k,n}} (\boldsymbol{x})
        &= \quad
        \sum_{ \substack{ D \in \mathsf{nc}(k-1) } } \quad 
        \Eulerian(n,D) \quad \ 
        \cdot \left(\prod_{i\in D} x_i \right) 
        \cdot \left(\prod_{ \substack{i \in \{0,\dots , k-1 \} \\ i,i+1 \notin D } } (1+x_i) \right) \\
        &= 
        \sum_{ \substack{ \sigma \in \Sym_k \\ \Des(\sigma) \in \mathsf{nc}(k-1) } } 
        \binom{n - \sigma(k)}{k - \sigma(k)} 
        \cdot \left(\prod_{i\in \Des(\sigma)} x_i \right) 
        \cdot \left(\prod_{ \substack{i \in \{0,\dots , k-1 \} \\ i,i+1 \notin \Des(\sigma) } } (1+x_i) \right) \,.
    \end{align*}
\end{proposition}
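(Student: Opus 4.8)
The plan is to start from the definitions of the two multivariate polynomials and to translate each sum over maximal chains of $\cL(U_{k,n})$ into a sum over subset permutations via \Cref{lma: subset permutations in lattice}. That lemma identifies the maximal chains, through $\cF \mapsto \lambda_\cF$, with the set
\[
\cP_{k,n} = \bigcup_{\substack{S \subseteq [n] \\ |S| = k}} \bigset{\sigma \in \Perm(S)}{\{1,\dots,\sigma(\max S)\} \subseteq S},
\]
and since $\lambda_\cF$ is by construction the one-line notation of the associated subset permutation, descents are preserved: $\Des(\lambda_\cF) = \Des(\sigma)$. Thus, after this substitution, each claimed identity becomes a statement that the weighted count over $\cP_{k,n}$ can be reorganized by grouping its elements either by their descent set (for the Eulerian form) or by their standardization (for the binomial form). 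Both regroupings will rely on the fact, recorded above, that extending and standardizing preserve descents.

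For the first (Eulerian) equality I would partition $\cP_{k,n}$ by the descent set $D = \Des(\sigma) \in \mathsf{nc}(k-1)$ and show that the number of $\sigma \in \cP_{k,n}$ with $\Des(\sigma) = D$ equals $\Eulerian(n,D)$. The bijection realizing this is the extension $\sigma \mapsto \sigma^{\uparrow}$. The key observation is that the membership condition $\{1,\dots,\sigma(\max S)\} \subseteq S$ forces $\min(S^c) > \sigma(\max S)$, so appending $S^c$ in increasing order creates no descent at or after position $k$; hence $\Des(\sigma^{\uparrow}) = \Des(\sigma) \subseteq \{1,\dots,k-1\}$. Conversely, $\pi \in \Sym_n$ arises as some $\sigma^{\uparrow}$ exactly when it is increasing on positions $k, k+1, \dots, n$, that is, when $\Des(\pi) \subseteq \{1,\dots,k-1\}$. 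So extension restricts to a descent-preserving bijection from $\cP_{k,n}$ onto $\{\pi \in \Sym_n : \Des(\pi) \subseteq \{1,\dots,k-1\}\}$, and its fiber over any such $D$ has size $\Eulerian(n,D)$. Matching weights then yields the Eulerian formula in both cases, the ordinary one carrying the restriction $1 \notin D$ and the augmented one the extra factor $(1+x_0)$, which appears precisely when $1 \notin D$.

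For the second (binomial) equality I would instead partition $\cP_{k,n}$ by the standardization $\tau = \sigma_{\downarrow} \in \Sym_k$, which again preserves descents, so $\Des(\tau) = \Des(\sigma)$ and the monomial weights agree. It then remains to count, for fixed $\tau$, the fiber $\{\sigma \in \cP_{k,n} : \sigma_{\downarrow} = \tau\}$. Writing $S = \{s_1 < \dots < s_k\}$, the unique subset permutation of $S$ standardizing to $\tau$ has last entry $\sigma(\max S) = s_{\tau(k)}$, so the condition $\{1,\dots,s_{\tau(k)}\} \subseteq S$ holds if and only if the $\tau(k)$ smallest elements of $S$ are exactly $1,2,\dots,\tau(k)$, i.e. $\{1,\dots,\tau(k)\} \subseteq S$. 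Counting such $S$ amounts to freely choosing the remaining $k-\tau(k)$ elements from $\{\tau(k)+1,\dots,n\}$, giving $\binom{n-\tau(k)}{k-\tau(k)}$, as claimed; the ordinary case adds $\sigma(1)<\sigma(2)$, equivalent to $1 \notin \Des(\tau)$, and the augmented case carries the extra index $i=0$.

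The main obstacle I expect is the bookkeeping in this fiber count: one must check carefully that the single condition $\{1,\dots,\sigma(\max S)\} \subseteq S$ simultaneously pins down the bottom $\tau(k)$ elements of $S$ as the initial segment $\{1,\dots,\tau(k)\}$ and leaves the top $k-\tau(k)$ elements completely free, which is exactly what produces the clean binomial coefficient. The descent-preservation of both operations, together with the absence of descents at positions $\ge k$ in the extension, is what guarantees that neither regrouping disturbs the $\gamma$-type monomial weights; once these points are verified, both identities follow by comparing coefficients term by term.
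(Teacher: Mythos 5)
Your proposal is correct and follows essentially the same route as the paper: apply \Cref{lma: subset permutations in lattice} to rewrite the defining sums over maximal chains as sums over subset permutations, then obtain the Eulerian form via the descent-preserving extension $\sigma \mapsto \sigma^{\uparrow}$ (whose image is exactly the permutations in $\Sym_n$ with descent set contained in $\{1,\dots,k-1\}$) and the binomial form via the descent-preserving standardization $\sigma \mapsto \sigma_{\downarrow}$ with fibers of size $\binom{n-\tau(k)}{k-\tau(k)}$. Your fiber computations are slightly more detailed than the paper's but the argument is the same.
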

\begin{proof}
    We apply \Cref{lma: subset permutations in lattice} to the definitions of the multivariate polynomials, and get
\begin{align*}
    \Chow_{U_{k,n}} (x)
    &= \sum_{ \sigma } x^{ \des(\sigma ) } \cdot (1 + x)^{k -1 -2\cdot \des(\sigma)} \,, \quad \text{ and} \\
    \Chowaug_{U_{k,n}} (x)
    &= \sum_{ \sigma } x^{ \des(\sigma ) } \cdot (1 + x)^{k -2\cdot \des(\sigma)} \,,
\end{align*}
where the sums range over all subset permutations~$\sigma = \sigma_1 \dots \sigma_k$, such that $\Des(\sigma) \in \mathsf{nc}(k-1)$, ($1\notin \Des(\sigma)$ in the case of the Chow polynomial), and~$\{1, 2,\dots , \sigma_k\}\subseteq\{\sigma_1, \sigma_2 \dots , \sigma_k\}$.

The subset permutations~$\sigma$ that appear as edge-labeling sequence $\lambda_\cF$ are exactly those whose extended permutation $\sigma^\uparrow$ has descent set
\[
\Des({\sigma}^{\uparrow}) \subseteq \{1,\dots,k-1\} \,.
\]
This is because all integers smaller than $\sigma(k)$ must be positioned within the first $k$ entries of the one-line notation of ${\sigma}^{\uparrow}$, 
meaning there are no descents beyond position $k$ since $\sigma^\uparrow (k) < \dots < \sigma^\uparrow (n)$.
This proves the first equation for each polynomial.

The map $\sigma \mapsto \sigma_{\downarrow}$ is a surjective map from the set of subset permutations $\sigma$, that appear as edge-labeling sequence $\lambda_\cF$, to $\Sym_k$. 
Any integer smaller than $\sigma_k$ must be in $\{\sigma_1,\dots , \sigma_k\}$ and is therefore fixed under standardizing.
The fiber of a permutation $\pi\in \Sym_k$ is thus given by
\[
    \# \left\{\sigma \in \Perm(S) \ {\Big \vert } \  S \in \binom{[n]}{k} \,, \, \sigma_\downarrow = \pi \right\} 
    = \binom{n-\pi(k)}{k - \pi(k)} \,.
\]
Since the descents are preserved under standardizing, this completes the proof.
\end{proof}

\begin{theorem}
    \label{thm: multi Chows monomial}
    The multinomial Chow polynomial and the multinomial augmented Chow polynomial of the uniform matroid $U_{k,n}$ are given by
    \begin{align*}
        \Chow_{U_{k,n}}(\boldsymbol{x}) 
        &= \sum_{\substack{I \subseteq \{1,\dots,k\} \\ 1 \in I }} 
        \binom{n}{ \Delta I } \prod_{i\in I\setminus \{1\}} x_{i-1} \,,
        \quad\text{and} \\
        \Chowaug_{U_{k,n}}(\boldsymbol{x}) 
        &= \sum_{\substack{I \subseteq \{1,\dots,k\} }} 
        \binom{n}{ \Delta I } \ \ \prod_{i\in I} \ x_{i-1} \,.
    \end{align*}
\end{theorem}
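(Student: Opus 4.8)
The plan is to prove both identities by extracting, on each side, the coefficient of an arbitrary squarefree monomial and checking that the two coefficients agree; I will carry out the Chow case in detail, the augmented case being entirely parallel. Note first that every monomial occurring in either polynomial is squarefree, since within a single $\gamma$-summand the index set $D$ and the set of indices appearing in the factors $(1+x_i)$ are disjoint. So fix $T \subseteq \{1,\dots,k-1\}$ and consider the monomial $\prod_{j \in T} x_j$. On the right-hand side of the claimed identity, the assignment $I = \{1\} \cup \{\,j+1 : j \in T\,\}$ is a bijection between subsets $T \subseteq \{1,\dots,k-1\}$ and subsets $I \subseteq \{1,\dots,k\}$ containing $1$, under which $\prod_{i \in I \setminus \{1\}} x_{i-1} = \prod_{j \in T} x_j$. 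Hence the coefficient of $\prod_{j\in T} x_j$ on the monomial side is the \emph{single} multinomial coefficient $\binom{n}{\Delta I}$, with no summation. It therefore suffices to show that the coefficient of $\prod_{j\in T} x_j$ in the $\gamma$-expansion of $\Chow_{U_{k,n}}(\boldsymbol{x})$ from \Cref{prop: Chow and augChow uniform gamma exp} equals $\binom{n}{\Delta I}$.

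Next I would expand the product $\prod_{i,\,i+1 \notin D}(1+x_i)$ in each $\gamma$-summand and collect monomials. A descent set $D$ (with $D \in \mathsf{nc}(k-1)$ and $1 \notin D$) contributes the monomial $\prod_{j\in T} x_j$ at most once: since the index set of each resulting monomial is the disjoint union of $D$ with a subset $E$ of $\{i \in \{1,\dots,k-1\} : i,i+1 \notin D\}$, matching it to $T$ forces $D \subseteq T$ and $E = T \setminus D$, which is legitimate exactly when $T \setminus D \subseteq \{i : i,i+1 \notin D\}$. The crucial simplification is that, because $D$ has no two consecutive elements, these constraints collapse to the single condition $D \subseteq T \setminus (T+1)$, where $T+1 = \{\,j+1 : j \in T\,\}$; conversely, writing $L := T \setminus (T+1)$ for the set of left endpoints of the maximal runs of $T$, every $D \subseteq L$ is automatically nonconsecutive and contained in $\{1,\dots,k-1\}$, so it genuinely contributes. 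Thus the coefficient of $\prod_{j\in T} x_j$ in the $\gamma$-expansion equals $\sum_{D \subseteq L \setminus \{1\}} \Eulerian(n,D)$.

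Finally I would invoke the classical identity that for $S = \{s_1 < \dots < s_r\} \subseteq \{1,\dots,n-1\}$ the number of permutations in $\Sym_n$ with descent set contained in $S$ is $\sum_{D \subseteq S} \Eulerian(n,D) = \binom{n}{s_1,\, s_2 - s_1,\, \dots,\, s_r - s_{r-1},\, n - s_r}$. Applying it with $S = L \setminus \{1\}$ converts the coefficient into a single multinomial coefficient, and it remains to check that this equals $\binom{n}{\Delta I}$. This is a bookkeeping step comparing the run-minima of $I = \{1\} \cup (T+1)$ with the run-minima $L$ of $T$: adjoining the element $1$ either extends the first run of $T+1$ (when $1 \in T$) or creates a new singleton run $\{1\}$ (when $1 \notin T$), and in each case the composition of $n$ recorded by $\Delta I$ is seen to coincide with the one determined by $L \setminus \{1\}$. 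The augmented case is identical, with $T \subseteq \{0,\dots,k-1\}$, the bijection $I = T+1$, no constraint $1 \notin D$, and the role of $1$ played by $0$.

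I expect the main obstacle to be this last bookkeeping step: carefully matching the parts of $\binom{n}{\Delta I}$ with the gaps of the descent set $L \setminus \{1\}$ while keeping track of the shift $I = \{1\} \cup (T+1)$ and splitting into the two cases according to whether $1$ (respectively $0$) lies in $T$. The reduction of the descent-set constraints to $D \subseteq T \setminus (T+1)$ in the middle paragraph is the conceptual heart, but it is short once the nonconsecutiveness of $D$ is exploited.
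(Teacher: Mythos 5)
Your proposal is correct and follows essentially the same route as the paper's proof: both compare coefficients of squarefree monomials between the monomial expansion and the multivariate $\gamma$-expansion of \Cref{prop: Chow and augChow uniform gamma exp}, both reduce the contributing descent sets $D$ to the subsets of the run-minima of the exponent set, and both finish with the classical multinomial count of permutations with descent set contained in a prescribed set, followed by the same case split on whether $1$ (resp.\ $0$) lies in the exponent set. If anything, your explicit use of $L\setminus\{1\}$ when invoking the multinomial identity handles the constraint $1\notin D$ a bit more carefully than the paper's write-up of that step.
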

\begin{proof}
    For positive integers $k \leq n$, let
    \begin{align*}
        F_{k,n}(\boldsymbol{x}) 
        &= \sum_{\substack{I \subseteq \{1,\dots,k\} \\ 1 \in I }} 
        \binom{n}{ \Delta I } \prod_{i\in I\setminus \{1\}} x_{i-1} \,,
    \quad\phantom{and} \\
        G_{k,n}(\boldsymbol{x}) 
        &= \sum_{\substack{I \subseteq \{1,\dots,k\} }} 
        \binom{n}{ \Delta I } \ \ \prod_{i\in I} \ x_{i-1} \,.
    \end{align*}
    Comparing coefficients, we show that
    \[
        F_{k,n}(\boldsymbol{x}) = \Chow_{U_{k,n}} (\boldsymbol{x})
        \qquad \text{ and } \qquad
        G_{k,n}(\boldsymbol{x}) = \Chowaug_{U_{k,n}} (\boldsymbol{x}) \,.
    \]
    We focus on the proof of the multivariate Chow polynomial, as it has more restrictions on the sum's range. 
    The analogous statements for the multivariate augmented Chow polynomial follows by omitting these restrictions.\\
    Let $S \subseteq \{1,\dots, k-1\}$ be a subset and let $\boldsymbol{x}^S = \prod_{i\in S} x_i $.
    The coefficient of $\boldsymbol{x}^S$ in $F_{k,n}(\boldsymbol{x})$ and in~$\Chow_{U_{k,n}}(\boldsymbol{x})$, respectively, is given by
    \begin{align*}
        [\boldsymbol{x}^S] F_{k,n} (\boldsymbol{x}) = \binom{n}{\Delta \{1\}\cup(S + 1)} 
        \qquad \text{ and } \qquad
        [\boldsymbol{x}^S] \Chow_{U_{k,n}} (\boldsymbol{x}) =  \sum_{ D } \Eulerian (n,D) 
    \end{align*}
    where $S + 1 = \set{s+1}{s \in S}$, 
    and where the sum on the right ranges over all sets $D \in \mathsf{nc}(k-1)$ 
    with $1\notin D$, 
    such that
    \begin{equation}
        \label{eq: write S via D}
        S = D \cup \bigset{ i\in \{1,\dots , k-1\} }{ i,i+1 \notin D }
        = \bigset{i\in \{1,\dots , k-1\}}{ i \in D \text{ or } i \notin \overleftarrow{D}}
    \end{equation}
    where $\overleftarrow{D} = \bigset{i\in \{1,\dots , k-1\}}{i\in D \text{ or } i-1 \in D }$.
    This follows from \Cref{prop: Chow and augChow uniform gamma exp}.
    Let us construct all such subsets $D$.
    If $s,s+1\in S$, we must have $s+1 \notin D$, which also forces $s+1 \notin \overleftarrow{D}$. 
    Thus, for a maximal consecutive subset $\{s, s+1, \dots , s+m\}$ of $S$, none of $s+1, \dots , s+m$ are in~$D$ or~$\overleftarrow{D}$.
    Let~$S = S_1 \cup S_2 \cup \dots \cup S_j$ be the disjoint partition of~$S$ into maximal consecutive subsets, then we just proved that $D \subseteq \{ \min (S_1) ,\dots \min (S_j) \}$.
    Each of these subsets satisfies~\eqref{eq: write S via D}, since~$\min (S_i) \notin D$ for some index $i$ implies $\min (S_i) \notin \overleftarrow{D}$.
    These subsets~$D$ naturally avoid consecutive elements, and the restriction $1\notin D$ does not affect any other constraint.
    We thus get
    \begin{align*}
      [\boldsymbol{x}^S] \Chow_{U_{k,n}} (\boldsymbol{x})
      &=  \sum_{ \substack{D \subseteq \{ \min (S_1) ,\dots \min (S_j) \} \\ 1\notin D } } 
        \Eulerian (n,D) \\
      &= \# \bigset{ w\in \Sym_n }{ \Des(w) \subseteq \{ \min (S_1) ,\dots \min (S_j) \} } \\
      &= \binom{ n }{ \min (S_1) , \min (S_2) - \min (S_1) , \dots , \min (S_{j}) - \min (S_{j-1}) , n - \min (S_j) } \,.
    \end{align*}
    The last equation is explained as follows: To construct a permutation $w\in \Sym_n$ without any descent in the first $m+1$ positions, choose $m$ elements for $w(1)<w(2)< \dots < w(m)$. Each choice uniquely determines the initial sequence. If the next $\ell+1$ positions must also avoid any descent, choose $\ell$ elements of the remaining $n-m$. Repeating this iteratively yields the multinomial coefficient.\\
    Since the corresponding minima in the partition of $ \{1\} \cup (S + 1)$ are $1, \min (S_1) + 1$ (if and only if~$\min (S_1) > 1$), $ \min (S_2) + 1, \dots , \min (S_j) + 1$, and since $(1-1)! = 0! = 1$, we have
    \[
        \binom{ n }{ \min (S_1) , \min (S_2) - \min (S_1) , \dots , \min (S_{j}) - \min (S_{j-1}) , n - \min (S_j) }
        = \binom{ n }{ \Delta \{1\} \cup (S + 1) }
    \]
    which is the coefficient of $\boldsymbol{x}^S$ in $F_{k,n}(\boldsymbol{x})$, as desired.
\end{proof}

\begin{proof}[Proof of \Cref{thm: Chow four ways,thm: Chow aug four ways}]
    This follows by setting $x_i = x$ in \Cref{prop: Chow and augChow uniform gamma exp} and in \Cref{thm: multi Chows monomial}.
\end{proof} 

%%%%%%%%%%%%%%%%%%%%%%%%%%%%%%%%%%%%%%%%%%%%%%%%%%%%%%%%%%
\subsection{Schubert matroids}
\label{sec: Schubert matroids}
Recall \Citeauthor{Priv_Comm_Ferroni}'s conjecture stated in \Cref{thm: Ferroni}, saying that the coefficient of $x^m$ in the Chow and in the augmented Chow polynomials of the uniform matroid $U_{k,n}$ are given by
\begin{align*}
    [x^m] \Chow_{U_{k,n}}(x)
    &=
    \# \big\{ \substack{\ \text{loopless Schubert matroids on the ground set } [n] \\ \text{of rank $m+1$ and cogirth greater than } n-k }  \ \big\} 
    \quad
    \text{for } 0\leq m \leq k-1 \,,  \\
    [x^m] \Chowaug_{U_{k,n}}(x)
    &=
    \# \big\{ \ \substack{\text{\phantom{loop} Schubert matroids\phantom{less} on the ground set } [n] \\ \text{of rank \phantom{+}$m$\phantom{1} and cogirth greater than } n-k } \ \big\} \quad
    \text{for } 0\leq m \leq k \,.
\end{align*}
In this section, we study the Schubert matroid on the fixed ground set $[n]$ to determine the values of the right-hand side.
\medskip

The Schubert matroid $\cS_{I,\pi}$ for $I \in \binom{[n]}{k}$ and $\pi \in \Sym_n$ has bases $\set{J\in \binom{[n]}{k}}{I\leq_\pi J }$.
The rank of $\cS_{I,\pi}$ is $k$, the size of the set $I$, 
but its loops and cogirth depend on both $I$ and $\pi$.

\begin{proposition}
    \label{prop: Schubert matroid props}
    Let $n\geq k$ be positive integers, and let $I\subseteq \{1,\dots, n\}$ and $\pi\in \Sym_n$.
    Let $\min_\pi (I)$ denote the minimal, and let $\max_\pi (I)$ denote the maximal element in $I$ with respect to $\leq_\pi$.
    \begin{enumerate}
        \item $\cS_{I,\pi}$ has loops $\{\pi(1), \pi(2), \dots , \pi( m - 1) \}$ where $\pi(m) = \min_\pi (I)$.
        \item $\cS_{I,\pi}$ has cogirth $n+1- c$ where $\pi(c) = \max_\pi (I)$.
        \item $\cS_{I,\pi} = \cS_{\pi^{-1}(I),\id}$ with~$\pi^{-1}(I) = \set{\pi^{-1}(i)}{i\in I}$.
    \end{enumerate}
\end{proposition}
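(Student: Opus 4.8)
The plan is to prove part (3) first and then use it to reduce parts (1) and (2) to the case $\pi = \id$, since both the loops and the cogirth of a matroid are carried correctly (the former relabeled, the latter unchanged) under a ground-set relabeling. The engine behind (3) is the elementary observation that $\pi^{-1}$ is an order isomorphism from $([n],\leq_\pi)$ to $([n],\leq)$: reading off the definition $\pi(1)\leq_\pi\cdots\leq_\pi\pi(n)$ immediately gives $a \leq_\pi b$ if and only if $\pi^{-1}(a)\leq\pi^{-1}(b)$.

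For (3), I would lift this order isomorphism to the Gale order on $k$-subsets. Sorting $I$ with respect to $\leq_\pi$ corresponds, after applying $\pi^{-1}$, to sorting $\pi^{-1}(I)$ in the usual order, and likewise for any $J$; hence $I \leq_\pi J$ if and only if $\pi^{-1}(I) \leq \pi^{-1}(J)$. Consequently $J$ is a basis of $\cS_{I,\pi}$ exactly when $\pi^{-1}(J)$ is a basis of $\cS_{\pi^{-1}(I),\id}$, so the ground-set bijection $\pi^{-1}$ identifies the two matroids. The same bijection transports the distinguished elements: $\min_\pi(I)=\pi(m)$ becomes $\min(\pi^{-1}(I))=m$ and $\max_\pi(I)=\pi(c)$ becomes $\max(\pi^{-1}(I))=c$.

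With (3) available it suffices to treat $\pi=\id$, where $\min_\pi(I)=\min(I)=m$ and $\max_\pi(I)=\max(I)=c$. For (1), every basis $J=\{j_1<\dots<j_k\}$ satisfies $\min(J)=j_1\geq i_1=\min(I)=m$ by the first Gale inequality, so each element smaller than $m$ lies in no basis and is a loop; conversely, for any $x\geq m$ one produces a basis through $x$ by replacing in $I$ the largest element that is $\leq x$ by $x$, which only increases entries and therefore keeps the set $\geq I$ in Gale order. Transporting back by $\pi^{-1}$ turns the loop set $\{1,\dots,m-1\}$ of $\cS_{\pi^{-1}(I),\id}$ into $\{\pi(1),\dots,\pi(m-1)\}$, as claimed.

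For (2), recall that the cogirth is the smallest size of a cocircuit, i.e.\ of a minimal subset meeting every basis, so it equals $n$ minus the largest size of a non-spanning subset. The candidate smallest cocircuit is $C=\{c,c+1,\dots,n\}$ of size $n+1-c$: it meets every basis because $\max(J)=j_k\geq i_k=c$, and it is minimal because for each $x\in C$ the set $\{i_1,\dots,i_{k-1},x\}$ is a basis meeting $C$ only in $x$. The crux, and the step I expect to be the main obstacle, is the matching lower bound, namely that every $S\subseteq[n]$ with $|S|\geq c$ is spanning. I would argue by contradiction with a counting estimate: if the top $k$ elements $t_1<\dots<t_k$ of $S$ violated $t_\ell\geq i_\ell$ for some $\ell$, then $S$ would contain at least $|S|-k+\ell\geq c-k+\ell$ elements below $i_\ell$, whereas $[n]$ contains only $i_\ell-1\leq i_k-(k-\ell)-1=c-k+\ell-1$ such elements, a contradiction. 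Hence a maximal non-spanning set has size $c-1$, the cogirth equals $n+1-c$, and transporting back by $\pi^{-1}$ (which preserves cogirth and sends $c=\max(\pi^{-1}(I))$ to $\pi(c)=\max_\pi(I)$) completes the argument.
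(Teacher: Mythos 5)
Your proposal is correct and follows essentially the same route as the paper: establish (3) from the observation that $\pi^{-1}$ is an order isomorphism, reduce (1) and (2) to $\pi=\id$, and for the cogirth show that a set is spanning precisely when its size reaches $c=\max(I)$. The only cosmetic differences are that you prove the spanning claim by a counting contradiction where the paper directly exhibits the top $|I|$ elements of $S$ as a dominating basis, and that you spell out the converse direction of (1) and the minimality of the cocircuit, which the paper leaves implicit.
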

\begin{proof}
    An element $x\in [n]$ is a loop in $\cS_{I,\pi}$ if $x$ is smaller than every element in $I$, which proves the first statement.
    The total order $\leq_\pi$ is defined by $i <_\pi j$ if $\pi^{-1}(i) < \pi^{-1}(j)$ in the usual order~$1<2<\dots < n$. The third statement follows immediately by this definition, since $J$ is a basis in $S_{I,\pi}$ if and only if $\pi^{-1}(J)$ is a basis in $S_{\pi^{-1}(I),\id}$. 
    
    To prove the second statement, first consider the identity permutation~$\pi = \id$ .
    The girth of a matroid $M$ gives the size of the smallest possible dependent set in $M$. Any subset of size smaller than the girth is therefore independent, meaning that it is contained in some basis of~$M$.
    Thus, the Schubert matroid~$\cS_{I,\id}$ has cogirth greater than $n-c$ if and only if every subset~$S \subseteq [n]$ of size $\vert S \vert = c$ contains a basis $J$ of $\cS_{I,\id}$.
    In particular, the set $S = \{1, \dots , c\}$ must then contain a subset $J$ with $J \geq I$,
    which gives $I \subseteq \{1,\dots , c\}$.\\
    Vice versa, if $I \subseteq \{1, \dots , c\}$ has size $\vert I \vert = m$ 
    and if~$S = \{s_1 < \dots < s_c \} \subseteq [n]$ is a subset of size~$c$, then let~$J = S \setminus \{s_1,\dots , s_{c-m}\}$ be the set containing the $m$ largest elements of $S$. 
    Since $i \leq s_i$ for all indices $i$, we get 
    \[
        I \leq \{ c -m+1 , \dots , c\} \leq \{ s_{c -m+1} , \dots , s_c\} = J \text{ with } J \subseteq S  \,.
    \]
    This proves, that the cogirth of $\cS_{I,\id}$ is greater than $n-c$ if and only if $\max(I) \leq c$. 
    Together with the contraposition of this statement and with the third statement of our lemma, we have proven the second.  
\end{proof}

\begin{theorem}
    \label{thm: SM counting when restricted}
    The number of Schubert matroids on the ground set $[n]$ of rank $m$, with $\ell$ loops, and having cogirth $n+1-k$ is 
    \[
        \sum_{ \substack{I \subseteq \{\ell + 1,\dots , k\} \\ \ell + 1 , k \in I \\ \vert I \vert = m } } \binom{n}{\Delta I} \,.
    \]
\end{theorem}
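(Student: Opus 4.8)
The plan is to count the labeled Schubert matroids directly, grouping them by isomorphism type and computing, for each type, how many labelings on $[n]$ it admits. Throughout write $\cS_{I,\id}$ for $I=\{i_1<\dots<i_m\}$, and let $\mu_1<\dots<\mu_s$ be the minima of the maximal consecutive blocks of $I$, with the conventions $\mu_0=1$ and $\mu_{s+1}=n+1$. First I would reduce to the identity permutation: by \Cref{prop: Schubert matroid props}(3) every Schubert matroid on $[n]$ is isomorphic to some $\cS_{I,\id}$, and by parts (1) and (2) this standard form has exactly $\min I-1$ loops, cogirth $n+1-\max I$, and rank $\lvert I\rvert$. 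Since the number of loops and the cogirth are isomorphism invariants, a labeled Schubert matroid has rank $m$, exactly $\ell$ loops and cogirth $n+1-k$ precisely when it is isomorphic to some $\cS_{I,\id}$ with $I\subseteq\{\ell+1,\dots,k\}$, $\ell+1,k\in I$ and $\lvert I\rvert=m$, that is, with $I$ ranging over the index set of the asserted sum. Writing $a(M)=n!/\lvert\mathrm{Aut}(M)\rvert$ for the number of labelings on $[n]$ isomorphic to $M$, it then remains to prove
\[
\frac{n!}{\lvert\mathrm{Aut}(\cS_{I,\id})\rvert}=\binom{n}{\Delta I}
\qquad\text{and}\qquad
I\longmapsto \cS_{I,\id}\ \text{is injective up to isomorphism,}
\]
the latter ensuring each isomorphism type is counted once.

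The key step is the claim that $\mathrm{Aut}(\cS_{I,\id})$ is the direct product of the symmetric groups $\Perm([\mu_t,\mu_{t+1}-1])$ for $t=0,\dots,s$ on the intervals between consecutive block minima. Their sizes $\mu_1-1,\mu_2-\mu_1,\dots,\mu_s-\mu_{s-1},\,n+1-\mu_s$ are exactly the parts of $\binom{n}{\Delta I}$, so this claim yields $\lvert\mathrm{Aut}(\cS_{I,\id})\rvert=n!/\binom{n}{\Delta I}$. To analyse the bases I would first record the prefix description: writing $f(j)=\lvert I\cap[1,j]\rvert$, a set $B$ with $\lvert B\rvert=m$ is a basis of $\cS_{I,\id}$ if and only if $\lvert B\cap[1,j]\rvert\le f(j)$ for every $j$ (the Gale-order reformulation of $I\le B$). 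A step-by-step comparison shows the constraint at $j$ is implied by the one at $j-1$ exactly when $j\in I$, so the essential constraints sit at the positions $j\notin I$.

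For the lower bound I would show that each adjacent transposition $(x\ x+1)$ with $x,x+1$ in a common interval $[\mu_t,\mu_{t+1}-1]$ preserves the base set, giving $\Perm([\mu_t,\mu_{t+1}-1])\subseteq\mathrm{Aut}$. Swapping $x$ and $x+1$ affects only the constraint at position $x$, and one checks it can fail solely in the configuration $x\notin I$, $x+1\in I$ (where a basis tight at $x$ and containing $x+1$ exists); inside an interval this never occurs, since $x+1\in I$ there forces $x\in I$. For the upper bound I would use that the sets $[1,\mu_{t+1}-1]$ are precisely the cyclic flats of $\cS_{I,\id}$, forming a chain of distinct sizes; any automorphism permutes the cyclic flats preserving the chain, hence fixes each $[1,\mu_{t+1}-1]$ setwise and therefore preserves every interval. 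Reading off the sizes and ranks of this canonical chain also recovers $I$, which gives the required injectivity.

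I expect the main obstacle to be the automorphism computation, and within it the boundary bookkeeping: the naive guess that $(x\ x+1)$ fails to be an automorphism whenever $x\notin I$ is wrong near the top of the ground set, where no tight basis containing $x+1$ exists, and pinning down the exact criterion ``$x\notin I$ and $x+1\in I$'' is what forces the orbits to be the intervals $[\mu_t,\mu_{t+1}-1]$. Verifying that these intervals are exactly the cyclic-flat layers — including the degenerate loop layer $[1,\mu_1-1]$ and the coloop layer $[\mu_s,n]$, which do not sit strictly between two proper cyclic flats — is the second place where care is needed.
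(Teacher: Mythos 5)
Your proposal is correct, and its skeleton coincides with the paper's: both reduce to the standard forms $\cS_{I,\id}$ via \Cref{prop: Schubert matroid props}, read off from parts (1) and (2) that the admissible $I$ are exactly those indexing the asserted sum, and obtain $\binom{n}{\Delta I}$ as $n!$ divided by the order of the group fixing $\cS_{I,\id}$, which both arguments identify as the Young subgroup on the intervals between consecutive block minima (the paper phrases this via orbit--stabilizer for the action $\tau\ast\cS_{I,\pi}=\cS_{\tau(I),\tau\pi}$, whose stabilizer of $\cS_{I,\id}$ is precisely its automorphism group, so the two framings are the same count). Where you genuinely diverge is in the two verifications the paper treats lightly, and your versions are the more complete ones. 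For the upper bound the paper only checks that the boundary transpositions $(\mu_t-1,\ \mu_t)$ fail to stabilize, which excludes larger parabolic overgroups of the Young subgroup but not arbitrary overgroups; your chain-of-cyclic-flats argument pins the automorphism group down exactly, modulo the caveat you already flag about the loop and coloop layers (handled by noting automorphisms also preserve the sets of loops and coloops). You also make explicit the injectivity of $I\mapsto\cS_{I,\id}$ up to isomorphism, which is needed for the contributions of distinct $I$ to be disjoint and which the paper leaves implicit; recovering $I$ from the sizes and ranks of the cyclic-flat chain does the job. Finally, your prefix criterion for when $(x\ x{+}1)$ preserves the bases --- failure exactly when $x\notin I$ and $x+1\in I$, witnessed by the tight basis $B=I$ --- is correct and agrees in substance with the paper's computation. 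In short: same route, with a more robust stabilizer computation on your side.
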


Before we prove \Cref{thm: SM counting when restricted},
we deduce \Cref{thm: Ferroni}.

\begin{proof}[Proof of \Cref{thm: Ferroni}]
    This is an immediate consequence of \Cref{thm: SM counting when restricted}, as the polynomials coincide with the monomial expansions given in \Cref{thm: Chow four ways} and \Cref{thm: Chow aug four ways}.
\end{proof}

\begin{proof}[Proof of \Cref{thm: SM counting when restricted}]
By \Cref{prop: Schubert matroid props}, the number of Schubert matroids on the ground set~$[n]$ of rank~$m$, with~$\ell$ loops, and having cogirth~$n-k$ is
\begin{align}
    \label{eq: Chow monomial orbit}
    \sum_{ \substack{I \subseteq \{\ell + 1,\dots,k\} \\ \ell + 1, k \in I \\\vert I \vert = m } } \#\set{\cS_{\pi(I),\pi}}{\pi \in \Sym_n} \,.
\end{align}
The group $\Sym_n$ acts on the set of Schubert matroids by~$\tau \ast \cS_{I,\pi} = \cS_{\tau(I), \tau \pi}$.
We claim, that the multinomial coefficient $\binom{n}{\Delta I}$ gives the size of the orbit~$\set{ \cS_{\pi(I),\pi} }{ \pi \in \Sym_n}$ with respect to this group action.
Let~$I = I_1 \cup \dots \cup I_s \subseteq [n]$ be a nonempty set, written in its disjoint partition into maximal consecutive subsets such that $\min(I_j) < \min(I_{j+1})$.
By the orbit-stabilizer theorem and by Lagrange's theorem, we have
\[
\# \set{ \cS_{\pi(I),\pi} }{ \pi \in \Sym_n} = \frac{n!}{ \# \set{\pi\in \Sym_n}{ \cS_{\pi(I),\pi} = \cS_{I,\id} } } \,.
\]
A permutation $\pi\in \Sym_n$ satisfies $\cS_{\pi(I),\pi} = \cS_{I,\id}$ if and only if it can be written as a product of permutations $\pi = \pi^{(0)} \pi^{(1)} \cdots \pi^{(s)}$ such that
\begin{align*}
    \pi^{(0)} &\in \Perm(\{1,\dots,\min(I_1)-1\}), \\
    \pi^{(j)} &\in \Perm(\{\min(I_j), \min(I_j)+1, \dots, \min(I_{j+1})-1\}) \quad \text{for } 1 \leq j \leq s-1 \,, \\
    \pi^{(s)} &\in \Perm(\{\min(I_{s}),\dots, n \}) \,.
\end{align*} 
A permutation $\pi\in \Sym_n$ of this form certainly satisfies $\cS_{\pi(I),\pi} = \cS_{I,\id}$, since it is satisfied by each adjacent transposition~$(a, a+1)\in \Sym_n$ of this form.
Consider a transposition~$\tau = (a, a+1)$ with~$ a = \min(I_k) -1$ for some~$k\in \{1,\dots , s\}$, which is not of this form.
Since~$\min (I_k)$ is the minimum of a maximal consecutive subset in~$I$, we have~$a\notin I$.
Then,~$\tau (I) < I$, which implies that $\tau (I)$ is not a basis in~$\cS_{I,\id}$, 
and therefore~$\cS_{I,\id} \neq \cS_{\tau (I), \tau}$.
\end{proof}

\begin{example}
    Let $I = \{2,3,5,7,8\}=\{2,3\}\cup\{5\}\cup\{7,8\}$. 
    The stabilizer of the Schubert matroid $\cS_{I,\id}$ on the ground set $[n]$, for $n\geq 8$, is
    \begin{align*}
        \set{\pi\in \Sym_n}{ \cS_{\pi(I),\pi} = \cS_{I,\id} }
        &\cong
        \Perm(\{1\}) \times \Perm(\{2,3,4\}) \times \Perm(\{5,6\}) \times \Perm(\{7,\dots , n\}) \\
        &\cong
        \Sym_1 \times \Sym_3 \times \Sym_2 \times \Sym_{n-6} \,.
    \end{align*}
    Thus, the size of the orbit $\set{ \cS_{\pi(I),\pi} }{ \pi \in \Sym_n}$ equals the multinomial coefficient $\binom{n}{\Delta I}$, that is
    \[
    \# \set{ \cS_{\pi(I),\pi} }{ \pi \in \Sym_n} 
    = \frac{n!}{ \# ( \Sym_1 \times \Sym_3 \times \Sym_2 \times \Sym_{n-6} ) }
    = \frac{n!}{1!\ 3!\ 2!\ (n-6)!}
    =  \binom{n}{\Delta I} \,.
    \]
\end{example}

\subsection{Special cases and sequences of coefficients}
In this section, 
consider the polynomials~$\Chow_{U_{k,n}}(x)$ and~$\Chowaug_{U_{k,n}}(x)$ written in their monomial bases as given in \Cref{thm: Chow four ways} and in \Cref{thm: Chow aug four ways}.

\medskip
The \Dfn{$n$-th derangement polynomial} is defined by $d_n(x)=\sum_{w\in \cD_n} x^{\operatorname{exc}(w)}$ where $\cD_n$ is the set of fixpoint-free permutations in $\Sym_n$ and $\operatorname{exc}(w)=\# \set{i \in [n]}{w(i)>i}$ counts the number of excedances of $w$. 
The \Dfn{$n$-th Eulerian polynomial} is defined by $A_j(x) = \sum_{w \in \Sym_j } x^{\des(w)}$.
For~$j = 0$, the polynomials are set to $d_0(x) = 1 = A_0 (x)$.
For $j\geq 2$, we have 
\[
    x \cdot \Chow_{U_{n-1,n}}(x) = d_n(x)
    \qquad \text{and} \qquad 
    \Chowaug_{U_{n-1,n}}(x) = A_n(x)
\]
as given in \Cite{Hameister_2021} and in \Cite{liao2024chowringsaugmentedchow}, respectively.
For arbitrary $k\leq n$, we aim to provide a new proof for
\begin{align*}
    \Chow_{U_{k,n}}(x) 
    &= \phantom{1+ x\cdot } \sum_{j = 0}^{k-1} \binom{n}{j} \ d_j(x) \ (1 + x + \dots + x^{k-1-j}) \,, \\
    \Chowaug_{U_{k,n}} (x) 
    &= 1+ x \cdot \sum_{j = 0}^{k-1} \binom{n}{j} \ A_j(x) \ (1 + x + \dots + x^{k-1-j}) \,,
\end{align*}
first proven in \Cite[Theorem 1.9]{Ferroni2024}.
%\Cite[Theorem 1.9]{Ferroni2024}
\begin{proof}[Proof of \Cref{cor: chow via derangements}]
    For a nonempty set $I = I_1 \cup \dots \cup I_s$ written as its disjoint partition into maximal consecutive subsets, such that $\min (I_j) < \min(I_{j+1})$, 
    let $m(I) = \min(I_s)$.
    We order all subsets $I \subseteq \{1,\dots , k\}$ by this minimum $m(I)$.
    For instance,~$m(I) = 1$ for~$I$ being any consecutive set starting with $1$, and $m(I)=2$ for $I$ being any consecutive set starting with~$2$.
    For the Chow polynomial, we get
    \begin{align*}
        \Chow_{U_{k,n}} (x)
        &=
        \sum_{j = 1}^{k} \sum_{ \substack{I \subseteq \{1,\dots,k\} \\ 1\in I \\ m(I) = j } } \binom{n}{\Delta I} x^{\vert I \vert - 1} \\
        &= (1+x+\dots + x^{k-1}) + 
        \sum_{j = 3}^{k} \ \sum_{ \substack{I \subseteq \{1,\dots,k\} \\ m(I) = j \\ 1\in I } } \binom{n}{ j - 1} \binom{j - 1}{\Delta I\setminus I_s} 
        \ x^{\vert I\setminus I_s \vert - 1} \ x^{\vert I_s \vert } \\
        &= (1+\dots + x^{k-1}) + 
        \sum_{j = 3}^{k} \ \sum_{ \substack{I \subseteq \{1,\dots,j-2\} \\ 1\in I } } \binom{n}{ j - 1} \binom{j - 1}{\Delta I\setminus I_s} 
        \ x^{\vert I \vert - 1} \cdot (x + \cdots + x^{k-j}) \\
        &= (1+\dots + x^{k-1}) + 
        \sum_{j = 2}^{k-1} \binom{n}{ j } \Chow_{U_{j-1,j}}(x) \cdot x\cdot (1 + \cdots + x^{k-1-j}) \,.
    \end{align*}
    Since $d_j(x) = x\cdot \Chow_{U_{j-1,j}}(x)$ for $j \geq 2$, $d_1(x) = 0$ and $d_0(x) = 1$, we get
    \[
        \Chow_{U_{k,n}}(x) 
        l= \sum_{j = 0}^{k-1} \binom{n}{j} \ d_j(x) \ (1 + x + \dots + x^{k-1-j}) \,.
    \]
    The same argumentation for the augmented Chow polynomial gives
    \begin{align*}
        \Chowaug_{U_{k,n}} (x)
        &= 1 + (x + \dots + x^k) + \binom{n}{1} \cdot (x + \dots + x^{k-1}) 
        + \sum_{j = 2}^{k-1} \binom{n}{j} \Chowaug_{U_{j-1,j}}(x) \cdot (x + \dots + x^{k-j}) \,.
    \end{align*}
    Here, the constant term $1$ is produced by $I = \{\}$ the empty set,~$(x + \dots x^k)$ by all sets with~${m(I) = 1}$, 
    and~$\binom{n}{1} \cdot (x + \dots x^{k-1})$ by all sets with $m(I) = 2$.
    Since~$\Chowaug_{U_{j-1,j}}(x) = A_j(x)$, $A_0(x) = 1$, and~$A_1(x)= 1$, this completes the proof.
\end{proof}

Write
\[
\Chow_{U_{k,n}} (x) = \sum_{i = 0}^{k-1} \underline{c }_{\ k,n}^{(i)} \ x^i
\quad \text{ and } \quad
\Chowaug_{U_{k,n}} (x) = \sum_{i = 0}^{k} c_{k,n}^{(i)} \ x^i \,.
\]
For a loopless matroid~$M$ on~$n$ elements and of rank~$k$, the coefficient of~$x^m$ in the Chow polynomial~$\Chow_M(x)$ and in the augmented Chow polynomial~$\Chowaug_M(x)$ are bounded by the corresponding coefficients in $\Chow_{U_{k,n}}(x)$ and in $\Chowaug_{U_{k,n}}(x)$, respectively, as proven in \Cite[Theorem 1.11]{Ferroni2024}. 
That is,
\[
[x^m ] \Chow_M (x) \leq \underline{c }_{\ k,n}^{(m)} 
\quad \text{and} \quad
[x^m ] \Chowaug_M (x) \leq c_{\ k,n}^{(m)} \,.
\]
A straightforward computation gives
\begin{align*}
    \underline{c }_{\ k,n}^{(1)}
    &= \sum_{i = 2}^k \binom{n}{\Delta \{1,i\}} 
    = 1 + \sum_{i = 2}^{k-1} \binom{n}{i} \,, \\
    c_{\ k,n}^{(1)}
    &= \sum_{i = 1}^k \binom{n}{\Delta \{i\}} 
    = \sum_{i = 0}^{k-1} \binom{n}{i} \,.
\end{align*}

The sequences $(c_{k,n}^{(1)})_{n\geq k}$ for fixed $k$ are sums of binomials, which are listed in the OEIS \Cite{oeis}.
For example, \href{https://oeis.org/A000125}{A000125} for $k=3$ and \href{https://oeis.org/A008859}{A008859} for $k=6$.

In the non-augmented case, we get a connection to \Dfn{Grassmannian permutations}. A permutation $w\in \Sym_n$ is called Grassmannian if it has at most one descent.

\begin{corollary}
    For $k\geq 2$, the coefficient $\underline{c }_{\ k,n}^{(1)}$ is the number of Grassmannian permutations of length~$n$ avoiding a (fixed) permutation $\sigma \in \Sym_k$ with $\des(\sigma) = 1$.
\end{corollary}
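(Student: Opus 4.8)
The plan is to compare two closed forms. The straightforward computation preceding the statement already evaluates one side explicitly,
\[
\underline{c}_{k,n}^{(1)} \;=\; 1 + \sum_{i=2}^{k-1}\binom{n}{i},
\]
valid for $k\ge 2$ (for $k=2$ the sum is empty and $\underline{c}_{2,n}^{(1)}=1$). It therefore suffices to show that the number of Grassmannian permutations of $\Sym_n$ avoiding a fixed single-descent pattern $\sigma\in\Sym_k$ equals exactly this sum; the corollary then follows by matching the two expressions. The emphasis on a \emph{fixed} $\sigma$ is the real content: the actual set of avoided permutations depends on $\sigma$, but I claim its cardinality does not.

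For the Grassmannian count I would use the standard encoding of a Grassmannian permutation by the pair $(A,B)$, where $A$ is the increasing block of values left of the (unique) descent and $B=[n]\setminus A$ is the increasing block to its right. The non-identity Grassmannian permutations correspond bijectively to sets $A$ that are not initial segments, equivalently to complements $B$ that are not final segments, while all initial-segment choices collapse to the identity. Since $w$ is increasing on each block, any occurrence of a length-$k$ pattern uses some values of $A$ followed by some values of $B$, and its single descent must sit at the block boundary. Writing the descent of $\sigma$ at position $p$, an occurrence forces exactly $p$ chosen values from $A$ and $k-p$ from $B$, interleaved in magnitude according to the rank set $\{\sigma_1,\dots,\sigma_p\}\subseteq\{1,\dots,k\}$. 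Thus $\sigma$-containment reduces to a monotone condition on $(A,B)$, and $\sigma$-avoidance to its negation.

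It remains to count the admissible pairs $(A,B)$ and to verify that the total is $1+\sum_{i=2}^{k-1}\binom{n}{i}$ independently of $\sigma$. For $k=3$ this is transparent and worth spelling out as a guide: for $\sigma=213$ the avoidance condition is that $B$ be a contiguous interval, whereas for $\sigma=231$ it is that at most one element of $A$ exceed $\min B$; in each case a short count yields $1+\binom{n}{2}$, with the identity supplying the $+1$ and the non-identity avoiders being indexed by $2$-subsets of $[n]$. The expected shape of the general statement is a size-graded bijection sending the identity to the summand $1$ and, for each $i\in\{2,\dots,k-1\}$, the non-identity $\sigma$-avoiders of "size $i$'' to the $i$-subsets of $[n]$ counted by $\binom{n}{i}$.

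The main obstacle is precisely this pattern-independence: distinct single-descent patterns of the same length exclude genuinely different families of Grassmannian permutations (already visible at $k=3$, where the containing sets for $213$ and $231$ differ), so the uniform value $1+\sum_{i=2}^{k-1}\binom{n}{i}$ must be established for all of them simultaneously. I would do this either by exhibiting the size-graded bijection above uniformly in the rank set $\{\sigma_1,\dots,\sigma_p\}$, or by an inclusion–exclusion argument on the block encoding $(A,B)$. Alternatively, and most cheaply, one may invoke the known enumeration of Grassmannian permutations avoiding a single length-$k$ one-descent pattern, which is itself $1+\sum_{i=2}^{k-1}\binom{n}{i}$, and simply reconcile it with the computed value of $\underline{c}_{k,n}^{(1)}$.
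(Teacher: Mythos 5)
Your closing fallback is precisely the paper's proof: the corollary is deduced in one line by citing Gil and Tomasko's enumeration of Grassmannian permutations of length $n$ avoiding a fixed one-descent pattern of length $k$, which equals $1+\sum_{i=2}^{k-1}\binom{n}{i} = \underline{c}_{\ k,n}^{(1)}$. If you are content to invoke that known result, your argument is complete and identical in substance to the paper's.

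The self-contained route you present first, however, is not a proof as written. The block encoding $(A,B)$, the observation that an occurrence of a pattern $\sigma$ with descent at position $p$ must take exactly $p$ values from the left block and $k-p$ from the right block, and your two $k=3$ computations are all correct. But the entire content of the statement is the independence of the count from the choice of $\sigma$, and that is exactly the step you defer: the ``size-graded bijection'' is offered only as an ``expected shape,'' and the inclusion--exclusion alternative is named but not carried out. For general $k$ and a general rank set $\{\sigma_1,\dots,\sigma_p\}$ the avoidance condition on $(A,B)$ is substantially more intricate than in the two $k=3$ cases you work out, and showing that the number of avoiders is always $1+\sum_{i=2}^{k-1}\binom{n}{i}$ is the theorem of Gil--Tomasko, not a routine verification. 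Either supply that argument in full or, as the paper does, simply cite the known enumeration and match formulas.
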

\begin{proof}
    This follows by \Cite[Theorem 3.3]{GilTomasko}, which provides the same formula as given here for $\underline{c }_{\ k,n}^{(1)}$.
\end{proof}

For fixed $k \leq 10$, the sequences $( \underline{c }_{\ k,n}^{(1)} )_{n\geq k}$ are listed in the OEIS,
e.g., \href{https://oeis.org/A000124}{A000124} for $k=3$,  \href{https://oeis.org/A050407}{A050407} for $k = 4$, \href{https://oeis.org/A027927}{A027927} for $k = 5$, and \href{https://oeis.org/A362193}{A362193} for $k=6$.

\medskip
The formulas that we get from \Cref{thm: multi Chows monomial} (with $x_i = x$) for the coefficients $\underline{c }_{\ k,n}^{(m)}$ and $c_{\ k,n}^{(m)}$ indicate, 
that the coefficients give the size of a set.
For instance, for~$m = 2$, we have
\begin{align*}
    \underline{c }_{\ k,n}^{(2)}
    &= 1 + \sum_{i = 3}^{k-1} \binom{n}{i} 
    + \sum_{i = 2}^{k-2} \binom{n}{i} 
    + \sum_{i = 2}^{k-3} \sum_{ j = 2 }^{k-i+1} \binom{n}{i,j,n-i-j} \,, \\
    c_{\ k,n}^{(2)}
    &= \sum_{i = 1}^{k-1} \binom{n}{i-1} 
    + \sum_{i = 1}^{k-2} \sum_{ j = 2 }^{k-i} \binom{n}{i-1,j,n+1-i-j} \,.
\end{align*}
Note that the sequences~$( \underline{c }_{\ k,n}^{(2)} )_{n\geq k}$ and $( c_{\ k,n}^{(2)} )_{n\geq k}$ are not listed in the OEIS, and the same applies for $m > 3$.
This leads to the following open problem:

\begin{open problem}
    Describe $\underline{c }_{\ k,n}^{(m)}$ and $c_{\ k,n}^{(m)}$ as sizes of sets of permutations of length $n$ according to certain restrictions.
\end{open problem}

\section*{Acknowledgment}
The author would like to thank Luis Ferroni for bringing Chow polynomials to her attention, sharing his conjecture, and providing valuable feedback during the writing of this paper.
She also wishes to thank Christian Stump for helpful discussions and his supervision throughout this research.
Additionally, the author acknowledges Hsin-Chieh Liao for pointing her to \Cite{liao2024equivariantgammapositivitychowrings} and offering insightful comments.

%%%%%%%%%%%%%%%%%%%%%%%%%%%%%%%%%%%%%%%%%%%%%%%%%%%%%%%%%%
%%%%%%%%%%%%%%%%%%%%%%%%%%%%%%%%%%%%%%%%%%%%%%%%%%%%%%%%%%
\printbibliography

\end{document}